\let\oldproofname=\proofname
\renewcommand{\proofname}{\rm\bf{\oldproofname}}
\newcommand{\mylabel}[2]{#2\def\@currentlabel{#2}\label{#1}}
\newcommand{\vertiii}[1]{{\left\vert\kern-0.25ex\left\vert\kern-0.25ex\left\vert #1 
    \right\vert\kern-0.25ex\right\vert\kern-0.25ex\right\vert}}
\newcommand*{\medcap}{\mathbin{\scalebox{1.5}{\ensuremath{\cap}}}}%
\newcommand{\doublewidetilde}[1]{{%
  \mathpalette\double@widetilde{#1}%
}}
\newcommand{\double@widetilde}[2]{%
  \sbox\z@{$\m@th#1\widetilde{#2}$}%
  \ht\z@=.9\ht\z@
  \widetilde{\box\z@}%
}
\newcommand{\mathleft}{\@fleqntrue\@mathmargin50pt}
\newcommand{\mathcenter}{\@fleqnfalse}
\newcommand{\zeromathleft}{\@fleqntrue\@mathmargin0pt}
\newtheorem{prop}{Proposition}[section]
\newtheorem*{defi*}{Definition}
\newtheorem{defi}[prop]{Definition}
\newtheorem{lem}[prop]{Lemma}
\newtheorem*{lem*}{Lemma}
\newtheorem{thm}[prop]{Theorem}
\newtheorem{theorem}{Theorem}
\newtheorem{proposition}{Corollary A}
\def\rr{\mathbb{R}}
\numberwithin{equation}{section}
\newcommand*{\corrected}{\textcolor{black}}
\begin{document}


\title[Schr\"{o}dinger equation on star-graphs]{The Schr\"odinger equation on a star-shaped graph under general coupling conditions}

\author[A. Grecu]{Andreea Grecu}
\address[A. Grecu]{University of Bucharest, Academiei Street, No. 14, 010014, Bucharest, Romania \\
\hfill\break\indent \and
\hfill\break\indent
Institute of Mathematics ``Simion Stoilow'' of the Romanian Academy,
Centre Francophone en Math\'{e}matique
\\21 Calea Grivitei Street \\010702 Bucharest \\ Romania}
\email{andreea.grecu@my.fmi.unibuc.ro}

\author[L. I. Ignat]{Liviu I. Ignat}
\address[L. I. Ignat]{Institute of Mathematics ``Simion Stoilow'' of the Romanian Academy,
Centre Francophone en Math\'{e}matique
\\21 Calea Grivitei Street \\010702 Bucharest \\ Romania
}
\email{liviu.ignat@gmail.com}

\begin{abstract}
We investigate dispersive and Strichartz estimates for the Schr\"{o}dinger time evolution propagator $\mathrm{e}^{-\mathrm{i}tH}$ on a star-shaped metric graph. The linear operator, $H$, taken into consideration is the self-adjoint extension of the Laplacian, subject to a wide class of coupling conditions. The study relies on an explicit spectral representation of the solution in terms of the resolvent kernel which is further analyzed using results from oscillatory integrals. As an application, we obtain the global well-posedness for a class of semilinear  Schr\"{o}dinger equations.
\end{abstract}

\maketitle

\noindent
{\textit{Keywords}:} Quantum graphs, Schr\"{o}dinger operator, dispersion, Strichartz estimates, nonlinear Schr\"{o}dinger equation, spectral theory, star-shaped network.\\

\noindent
{\textit{Mathematics Subject Classification 2010}:} 35J10, 34B45, 81U30 (primary), 81Q35, 35P05, 35Q55 (secondary).


\section{Introduction}\label{section 1}
It is well-known that in the case of the linear Schr\"{o}dinger equation on $\mathbb{R}$
\begin{equation}\label{system}
\left\{
\begin{array}{lll}
\mathrm{i} u_{t}(t,x)+\Delta u (t,x)=0, &t \neq 0, &x \in \mathbb{R},\\[5pt]
u(0,x)=u_{0}(x), & &x \in \mathbb{R}, 
\end{array}
\right.
\end{equation}
 the unitary group, $(\mathrm{e}^{\mathrm{i}t\Delta})_{t \in \mathbb{R}}$, possesses two important properties which can be derived for instance via Fourier transform (e.g. \cite[Theorem~IX.30]{reedsimon2}): the conservation of the $L^2-$ norm
$$\big\| \mathrm{e}^{\mathrm{i}t\Delta} u_0 \big\|_{L^2(\mathbb{R})} = \| u_0 \|_{L^2(\mathbb{R})}, \quad t \in \mathbb{R},$$
and the dispersion property
$$\big \|\mathrm{e}^{\mathrm{i}t\Delta} u_0\big\|_{L^{\infty}(\mathbb{R})} \lesssim \dfrac{1}{\sqrt{|t|}} \|u_0 \|_{L^1(\mathbb{R})}, \quad t \neq 0.$$
Once the previous inequalities are obtained, the following holds by interpolation for all $ p \in [1,2]$:
\begin{enumerate}[label=(\roman*)]
\item Dispersive estimates:
\begin{align*}
\big\| \mathrm{e}^{\mathrm{i} t \Delta} u_0\big\|_{ L^{p'}(\mathbb{R})} &\lesssim |t|^{\frac{1}{2}-\frac{1}{p}}\|u_0\|_{L^p(\mathbb{R})},\ t\neq 0.
\end{align*}
\end{enumerate}
Moreover, by a classical result of Keel and Tao in \cite{keel} more general space-time estimates, known as Strichartz estimates, also hold:

\begin{enumerate}[resume*]
\item Strichartz estimates (homogeneous, dual \corrected{homogeneous} and inhomogeneous):
\begin{equation*}
\big\| \mathrm{e}^{\mathrm{i} t \Delta} u_0 \big\|_{L^q_t(\mathbb{R},L^{r}_x(\mathbb{R}))} \lesssim \big\|u_0 \big\|_{L^2(\mathbb{R})},
\end{equation*}
\begin{equation*}
\bigg\| \int_{\mathbb{R}} \mathrm{e}^{-\mathrm{i} s \Delta} F(s,\cdot) ds \bigg\|_{L^2(\mathbb{R})} \lesssim \big\|F \big\|_{L^{q'}_t(\mathbb{R}, L^{r'}_x(\mathbb{R}))},
\end{equation*}
\begin{equation*}
\bigg\| \int_{s < t} \mathrm{e}^{\mathrm{i} (t-s) \Delta} F(s,\cdot) ds \bigg\|_{L^q_t(\mathbb{R}, L^{r}_x(\mathbb{R}))} \lesssim ||F ||_{L^{\tilde{q}'}_t(\mathbb{R}, L^{\tilde{r}'}_x(\mathbb{R}))},
\end{equation*}
\end{enumerate}
where $(q,r)$ and $(\tilde{q},\tilde{r})$ are sharp ${1}/{2}$--admissible exponent pairs. We recall that $(q,r)$ is ${1}/{2}$--admissible if $2 \leq q, r \leq \infty$ and satisfy the relation
\begin{equation}
\dfrac{1}{{q}} =\frac{1}{2}\Big(\frac {1}{2} - \frac{1}{r}\Big). \label{onehalf}
\end{equation}
These estimates are key ingredients in order to obtain well-posedness   of the nonlinear Schr\"{o}dinger equation for a class of power nonlinearities (e.g. \cite{tsutsumi, cazenave, linares}).\\

In this paper we derive analogous dispersive and Strichartz estimates for the Schr\"{o}dinger time evolution propagator $\mathrm{e}^{\mathrm{i}t \Delta (A,B)} \mathcal{P}_{ac}(-\Delta(A,B))$ in the case of a star-shaped metric graph, with general boundary conditions at the common vertex which induce a self-adjoint extension of the Laplacian $\Delta(A,B)$, where $\mathcal{P}_{ac}(-\Delta(A,B))$ denotes the projection onto the absolutely continuous spectral subspace of $L^2(\mathcal{G})$ associated to $-\Delta(A,B)$. The considered star-graph consists of a finite number of edges of infinite length attached to a common vertex, each of them being identified with a copy of the positive semi-axis. We point out that in this context we cannot use Fourier analysis to obtain the dispersive properties, as in the case of the real line. In order to overcome this impediment we adopt a spectral theoretical approach and we use some tools specific to oscillatory integrals.

It is worth to mention that in the case of a star-shaped metric graph these results were established in \cite{adami} only for particular cases of boundary conditions, namely Kirchhoff, $\delta$ and $\delta'$ coupling conditions. So, this makes our result, to the best of our knowledge, new. Also for star-graphs, in the case of Kirchhoff boundary conditions, similar results were obtained in \cite{mehmeti} where the Schr\"{o}dinger group $\mathrm{e}^{\mathrm{i}t(-\Delta + V)}$ is considered for real valued potentials $V$ satisfying some regularity and decay assumptions. We mention here that the latter is still in open problem in the case of more general boundary conditions. B\u{a}nic\u{a} and Ignat proved the same results in the case of trees with the last generation of edges of infinite length, with Kirchhoff coupling condition at the vertices, in the joint work \cite{banica2011} and in \cite{ignatsiam}. With the same conditions, dispersive estimates were obtained in the case of the tadpole graph in \cite{mehmetitadpole}. The extension of the results in this paper to general trees as in \cite{banica2011,banica2014} remains to be investigated, but new ideas have to be used.

The paper is organized as follows: in Section  \ref{section 2}, we give some preliminaries on Laplacians on metric star-graphs. We start by fixing some notations, we define the function spaces, we introduce the Laplace operator on star-graph, moreover, we recall a general result of Kostrykin and Schrader \cite{laplacians} that provides necessary and sufficient conditions for the self-adjointness of the Laplacian. Then we introduce the Schr\"{o}dinger equation on a star-shaped metric graph \corrected{$\mathcal{G}$} and we state the main results of this paper, Theorem \ref{theorem 1} concerning the dispersive and Strichartz estimates,  Theorem \ref{theorem 2} \corrected{and Theorem \ref{theorem 3}}, well-posedness results of the nonlinear equation
\corrected{
\begin{equation} \label{system nonlinear}
\left\{
\begin{array}{lll}
\mathrm{i} u_{t}(t,x)+\Delta(A,B) u (t,x) +  \corrected{ \lambda |u(t,x)|^{p-1} u(t,x)} =0, &t \neq 0,  &x\in \mathcal{G},\\[5pt]
u(0,x)=u_{0}(x), & &x\in \mathcal{G},
\end{array}
\right.
\end{equation}}
\corrected{In both cases, we consider $1<p<5$. Theorem  \ref{theorem 2} treats the case of $L^2(\mathcal{G})$-solutions, while Theorem \ref{theorem 3} considers solutions in the energy space $D(\mathcal{E})$.}

In Section \ref{section 3} we give the proofs of the main results following several steps. First we localize the absolutely continuous spectrum of the operator and then we give an explicit description of the solution of system \eqref{system star} using spectral theory tools. The dispersion properties are obtained based on this explicit form, relying on tools specific to oscillatory integrals, and the Strichartz space-time estimates follow as soon as the previously mentioned properties are proven. Finally, we prove the well-posedness in $L^2(\mathcal{G})$  \corrected{and in the form domain $D(\mathcal{E})$}   of the semi-linear Schr\"{o}dinger equation for a class of  nonlinearities in the sub-critical case (the terminology in this framework is described in detail in \cite{cacciapuoti2017}).

\section{Preliminaries and main results} \label{section 2}

\corrected{
We divide this section on two parts. In the first one we present general results about metric graphs and self-adjoint Laplace operators on such structure. In the second part we state the main results of this paper and comment on the previous work and main difficulties that arise.}

\subsection{\corrected{Self-adjoint Laplacians on graphs}} In the following we collect a few results necessary for a self-contained presentation as possible. For further details we refer to \cite{berkolaiko, olaf} and references therein.

\noindent \begin{defi} \corrected{ A discrete graph $\mathcal{G}:=(V,E,\partial)$ consists of a finite or countably infinite set of vertices $V=\{v_i\}$, a set of adjacent edges at the vertices $E=\{e_j\}$, internal and/or external, and an orientation map $\partial : E \to V \times V$ which associates to each internal $e_j$ edge the pair $(\partial_{-}e_j,\partial_{+}e_j)$ of its initial and terminal vertex, and to an external edge its initial vertex only.}

\end{defi}

\noindent \corrected{Each internal edge $e_j$ of the graph can be identified with a finite segment $I_j=[0,l_j] $ of the real line, such that $0$ correspond to its initial vertex and $l_j$ to its terminal one; each external edge $e_j$, with the half line $[0,\infty)$, with $0$ corresponding to its initial vertex. This defines a natural topology on $\mathcal{G}$ (the space of union of all edges).}

\begin{defi} \corrected{A metric graph is a discrete graph together with the set of edge lengths $\{ l_j \}_{j}$, equipped with a natural metric, with the distance of two points to be the length of the shortest path in $\mathcal{G}$ linking the points.}
\end{defi}

In the sequel, we consider a metric graph $\mathcal{G}$ as below, given by a finite number $n \in \mathbb{N}^{*}, n \geq 3$ of infinite length edges attached to a common vertex (a so called star-shaped metric graph), having each  edge identified with a copy of the half-line $[0,\infty)$. A function $u$ defined on $\mathcal{G}$ is a vector $u=(u_1,\dots ,u_n)^T$, each $u_j$ being defined on the interval $I_j=[0,\infty)$, $j=1,\dots, n$. 
\begin{center}
\includegraphics[scale=0.14]{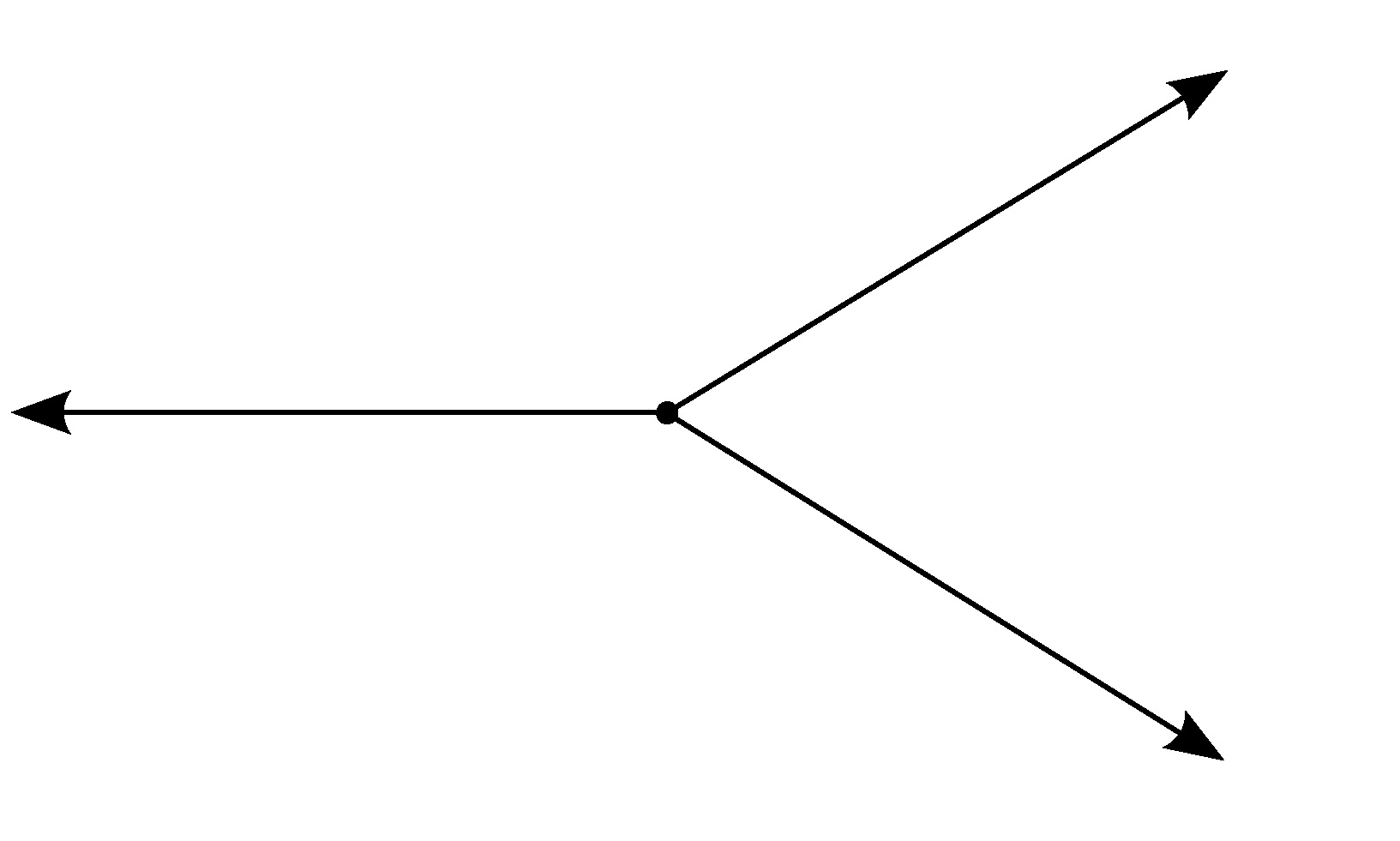}\\
{\textbf{Figure 1}: \textit{The star-graph $\mathcal{G}$ (one vertex, \corrected{$n=3$} edges)}}
\end{center}
The Lebesgue measures on the intervals $I_j $ induce a Lebesgue measure on the space $\mathcal{G}$. We introduce the Hilbert space $L^2(\mathcal{G})$ as the space of measurable and square-integrable functions on each edge of $\mathcal{G}$, i.e.
$$ L^2(\mathcal{G}) = \bigoplus_{j=1}^{n} L^2(I_j), \quad \| f \|^2_{L^2(\mathcal{G})} = \sum_{j=1}^{n} \int_{I_j} |f_j(s)|^2 ds,$$
 with $f=(f_j)^T_{j=\overline{1,n}}$, where $f_j \in L^2(I_j )$ is a complex valued function. The inner product $\langle \cdot, \cdot \rangle$ is the one induced by the usual inner product in $L^2([0,\infty))$, i.e.
\begin{equation*}
\langle f, g \rangle = \sum_{j=1}^{n} \int_{I_j} f_j(s) \overline{g_j}(s) \; ds.
\end{equation*}
Analogously, given $1 \leq p \leq\infty$ one can define $L^p(\mathcal{G})$ as the set of functions whose components are elements of $L^p(I_j)$, and the corresponding norm defined by 
$$ \| f \|^p_{L^p(\mathcal{G})} = \sum_{j=1}^{n} \| f \|^p_{L^p(I_j)} \quad for \quad 1 \leq p < \infty, \quad and   \quad  \| f \|_{L^{\infty}(\mathcal{G})} = \sup_{1 \leq j \leq n} \| f \|_{L^{\infty}(I_j)}. $$
 In order to define Laplacian operators, we consider the Sobolev space \corrected{$ H^m(\mathcal{G})$, $m\in \{1,2\}$},
$$ \corrected{H^m(\mathcal{G}) = \bigoplus_{j=1}^{n} H^m(I_j), \quad \| f \|^2_{H^m(\mathcal{G})} = \sum_{j=1}^{n} \| f \|^2_{H^m(I_j)} ,}$$
where \corrected{$H^m(I_j)$} is the classical Sobolev space on $I_j$, $j=1,\dots,n$. \corrected{We emphasize that in this definition we are not assuming any condition on the values of the functions at the joint point.}  

In the sequel we introduce the Laplacian  $\Delta(A,B)$ with the domain 
\begin{equation}
\mathcal{D}(\Delta (A,B))=\{ u \in H^2(\mathcal{G}) : A \underline{u} + B \underline{u'}=0 \} , \label{domain}
\end{equation}
acting as the second derivative along the edges, 
\[
\Delta (A,B)u = (u''_{1}, u''_{2},\dots,u''_{n})^{T}.
\]
In the definition of the domain,  $A$ and $B$ are $\mathbb{C}^{n \times n}$ matrices which express the coupling condition at the common vertex, and $\underline{u}=(u_1(0),\dots, u_n(0) )^T$ , $\underline{u'}=(u'_1(0_+), \dots, u'_n(0_+))^T$, respectively.

A crucial result concerning the parametrization of all self-adjoint extensions of the Laplace operator in $L^2(\mathcal{G})$ in terms of the boundary conditions, due to \cite{laplacians}, is the following: for any two $n \times n$ matrices $A$ and $B$, the next two assertions are equivalent:

\begin{enumerate}[label=(\roman*)]
\setlength\itemsep{2pt}

\item The operator $\Delta(A,B)$ is self-adjoint;

\item $A$ and $B$ satisfy:
\vspace{2pt}

\begin{enumerate}
\setlength\itemsep{2pt}

\item[\mylabel{H1}{(H1)}]The horizontally concatenated matrix $(A,B)$ has maximal rank;

\item[\mylabel{H2}{(H2)}] $AB^\dag$ is self-adjoint, i.e. $AB^\dag=BA^\dag$.

\end{enumerate}

\end{enumerate}

\noindent Matrices $A$ and $B$ satisfying \ref{H1} and \ref{H2} may also appear in the literature under the name of {\it Nevanlinna} pair (e.g. \cite{behrndt}). The most used type of couplings that satisfy the above hypotheses \ref{H1} and \ref{H2} are the following ones:
%
\begin{enumerate}
\item {\it {Kirchhoff--coupling:}}
\mathleft
\[
\psi_i(0)=\psi_j(0), \quad i,j=1,\dots,n, \qquad {\sum_{j=1}^{n}} \psi'_j(0_+)=0 ; \qquad \qquad  \quad 
 \]
\item {\it {$\delta$--coupling:}}
\[
\psi_i(0)=\psi_j(0), \quad i,j=1,\dots,n, \qquad {\sum_{j=1}^{n}} \psi'_j(0_+)=\alpha \psi_k(0), \ \alpha \in \mathbb{R}; 
\]
\item {\it {$\delta'$--coupling:}}
\[
\psi'_i(0_+)=\psi'_j(0_+), \quad i,j=1,\dots,n, \quad {\sum_{j=1}^{n}} \psi_j(0)=\beta \psi'_k(0_+), \ \beta \in \mathbb{R}.
 \]
\end{enumerate}

\noindent \corrected{For instance, in the case of \textit{Kirchhoff coupling}, the associated matrices $A$ and $B$ can be written as}
\corrected{
\[
A = \begin{bmatrix} 
    1 & -1 & 0 & \dots & 0 & 0 \\
    0 & 1 & -1 & \dots & 0 & 0 \\
    \vdots & \vdots & \vdots &  & \vdots & \vdots\\
    0 &  0  & 0 & \dots & 1 & -1\\
    0 &  0  & 0 & \dots & 0 & 0 
    \end{bmatrix}
, \quad 
B = \begin{bmatrix} 
    0 & 0 & 0 & \dots & 0 & 0 \\
    0 & 0 & 0 & \dots & 0 & 0 \\
    \vdots & \vdots & \vdots &  & \vdots & \vdots\\
    0 &  0  & 0 & \dots & 0 & 0\\
    1 &  1  & 1 & \dots & 1 & 1 
    \end{bmatrix}.
\]
}
\corrected{Note that this representation is not unique. More precisely, the Laplacians $\Delta(A,B)$ and $\Delta(A',B')$ coincide for equivalent pairs $(A,B)$ and $(A',B')$, i.e. they satisfy \ref{H1}, \ref{H2} and exists an invertible matrix $C$ such that $A'=CA$ and $B'=CB$.}\\

\corrected{According to \cite[Theorem 1.4.4]{berkolaiko}, the above conditions \ref{H1}, \ref{H2} on $A$ and $B$ can be described in an equivalent way. More precisely, 
the operator $\Delta$ on star-graph is self-adjoint if and only if there are three orthogonal (an mutually orthogonal) projectors, $P_{D}, P_{N}$ and $P_{R}=I-P_{D}-P_{N}$ acting on $\mathbb{C}^{n}$ and $\Lambda$ acting on $P_{D}\mathbb{C}^{n}$, invertible and self-adjoint, such that the boundary values $\underline{f}$ of $f$ satisfy
\begin{equation*}
\begin{cases}
P_{D} \underline{f}=0, & "Dirichlet \, Part"\\
P_{N}\underline{f}=0, & "Neumann \, Part"\\
P_{R}\underline{f'}=\Lambda P_{R}\underline{f}, & "Robin \, Part"\\
\end{cases}.
\end{equation*}}
\corrected{In fact, $P_D$ and $P_N$ are the projections on the kernel of $B$, respectively $A$ and $P_R=I-P_D-P_N$.
Explicit representations of these projectors in the particular cases described above can be found in \cite[Section 1.4.4, p. 24]{berkolaiko}.}

\corrected{ Following \cite[Theorem 1.4.11, p.22]{berkolaiko}, the quadratic form associated to $-\Delta(A,B)$ can be written as
\begin{equation}\label{quadraticform}
\begin{aligned}
\mathcal{E}(f,f) &= \sum_{j=1}^{n} \int_{I_j}  |f'_j(x)|^2 \, dx + < \Lambda P_{R} \underline{f}, P_{R} \underline{f} >_{\mathbb{C}^n},
\end{aligned}
\end{equation}}

\noindent \corrected{for any $f$ in the form domain
\begin{equation}\label{formdomain}
D(\mathcal{E})= \{  f \in H^1(\mathcal{G}) \, : \, P_{D}\underline{f}=0 \},
\end{equation}}

\noindent \corrected{and the corresponding sesqui-linear form
\begin{equation*}
\mathcal{E}(f,g)= \sum_{j=1}^{n} \int_{I_j}  f'_j(x) \overline{g}'_j(x) \, dx + < \Lambda P_{R} \underline{f}, P_{R} \underline{g} >_{\mathbb{C}^n}.
\end{equation*}}
\corrected{Moreover, for $M$ sufficiently large, the norm
\begin{equation}\label{formnorm}
\| f \|_{D(\mathcal{E})} := \sqrt{M \| f \|_{L^2(\Gamma)} + \mathcal{E}(f,f)}, \ f \in D(\mathcal{E})
\end{equation}}
\corrected{is equivalent  \cite[p.23]{berkolaiko}  to the norm of the space $H^1(\Gamma)$, i.e. exist $c_1, c_2 > 0$ such that
\begin{equation}\label{equivnorms}
c_1 \| f \|_{H^1(\Gamma)} \leq \| f \|_{D(\mathcal{E})} \leq c_2 \| f \|_{H^1(\Gamma)}.
\end{equation}}
\corrected{We denote by $D(\mathcal{E})^*$ the dual of the energy space $D(\mathcal{E})$. }

The subject of self-adjoint Laplacians on metric graphs has become popular under the name of "quantum of graphs", known for its wide applications in quantum mechanics. Since the dynamics of a quantum system is described by unitary operators, most of the literature has been concerned with self-adjoint operators. However, in \corrected{\cite{nonselfadjoint,husseinmaximal}}, non-self-adjoint Laplacians on graphs were considered, since there are cases where a system is described by non-conservative equations of motion. \corrected{As also mentioned in the previous works, many non-self-adjoint operators that satisfy the so called $\mathcal{PT}$--symmetry property, posses real spectrum. This is a necessary condition in order for the operator to be quantum-mechanically admissible. In the case of star-graphs, in \cite{kurasovrtsymmetry} such Laplace operators are studied, and a description in terms of the boundary conditions is provided.} Also, in \cite{kostrykincontraction}, contraction semigroups which are generated by Laplace operators which are not necessarily self-adjoint are studied. They provide a criteria for such semigroups to be continuity and positivity preserving and a  characterization of generators of Feller semigroups on metric graphs.

\subsection{\corrected{Main results}}
The model we will consider in this paper is given by the following linear Schr\"{o}dinger equation
\begin{equation}\label{system star}
\left\{
\begin{array}{lll}
\mathrm{i} u_{t}(t,x)+\Delta(A,B) u (t,x)=0, &t \neq 0, &x\in \mathcal{G}, \\[5pt]
u(0,x)=u_{0}(x), & &x\in \mathcal{G}, 
\end{array}
\right. 
\end{equation}
where matrices $A$ and $B$ are assumed to satisfy \ref{H1} and \ref{H2}. In the following, $\mathrm{e}^{\mathrm{i} t \Delta(A,B)}$ is the unitary group generated by $\Delta(A,B)$ with domain \eqref{domain}, $\mathcal{P}_{ac}(-\Delta(A,B))$ denotes the projection onto the absolutely continuous spectral subspace of $L^2(\mathcal{G})$ associated to $-\Delta(A,B)$. Along the paper $(q,r)$ and $(\tilde{q},\tilde{r})$ are sharp $1/2$--admissible exponent pairs, i.e. \eqref{onehalf} holds true, unless we specify otherwise
and $p'$ stands for the dual exponent of $p$. Also in order to simplify the presentation, by writing $f\lesssim g$ we understand that the inequality holds up to some positive multiplicative constants that depends on the matrices $A$ and $B$ and on the involved exponents.  

We will give now the statements of the main results of our paper.

\begin{theorem}\label{theorem 1} The time-evolution propagator $\mathrm{e}^{\mathrm{i} t \Delta(A,B)} \mathcal{P}_{ac}(-\Delta(A,B))$ satisfies 
\begin{enumerate}[label=(\roman*)]
\item Dispersive estimates: for all $p\in [1,2]$,
\begin{equation}\label{estimate 1}
\big\| \mathrm{e}^{\mathrm{i} t \Delta(A,B)} \mathcal{P}_{ac}(-\Delta(A,B)) u_0 \big\|_{ L^{p'}(\mathcal{G})} \leq C(A,B,p) |t|^{\frac{1}{2}-\frac{1}{p}}\|u_0\|_{L^p(\mathcal{G}) }, \forall \ t \neq 0;
\end{equation}

\item Strichartz estimates (homogeneous, dual homogenous and inhomogeneous): \label{strichartz}
\mathleft
\begin{equation*}\label{estimate 2}
\big\| \mathrm{e}^{\mathrm{i} t \Delta(A,B)} \mathcal{P}_{ac}(-\Delta(A,B)) u_0 \big\|_{L^q(\mathbb{R}, L^{r}(\mathcal{G}))} \lesssim \big\|u_0 \big\|_{L^2(\mathcal{G})},
\end{equation*}
\begin{equation*}\label{estimate 3}
\bigg\| \int_{\mathbb{R}} \mathrm{e}^{-\mathrm{i} s \Delta(A,B)} \mathcal{P}_{ac}(-\Delta(A,B)) F(s,\cdot) ds \bigg\|_{L^2(\mathcal{G})} \lesssim \big\|F \big\|_{L^{q'}(\mathbb{R}, L^{r'}(\mathcal{G}))},
\end{equation*}
\begin{equation*}\label{estimate 4}
\bigg\| \int_{s < t} \mathrm{e}^{\mathrm{i} (t-s) \Delta(A,B)} \mathcal{P}_{ac}(-\Delta(A,B)) F(s,\cdot) ds \bigg\|_{L^q(\mathbb{R}, L^{r}(\mathcal{G}))} \lesssim ||F ||_{L^{\tilde{q}'}(\mathbb{R}, L^{\tilde{r}'}(\mathcal{G}))}.
\end{equation*}
\end{enumerate}
\end{theorem}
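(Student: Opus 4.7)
The plan is to reduce the propagator restricted to the absolutely continuous subspace to an explicit oscillatory kernel indexed by the spectral parameter, and then to extract the sharp decay $|t|^{-1/2}$ by a van der Corput / stationary phase argument. Once the $L^1\to L^\infty$ dispersive bound is in hand, item (i) follows by Riesz--Thorin interpolation with the trivial $L^2$ unitarity of $\mathrm{e}^{\mathrm{i} t\Delta(A,B)}\mathcal{P}_{ac}(-\Delta(A,B))$, and item (ii) is a direct application of the abstract $TT^*$ theorem of Keel--Tao \cite{keel}, since \eqref{onehalf} is exactly the one-dimensional Schr\"odinger admissibility relation.

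First I would compute the integral kernel of the resolvent $R(z)=(-\Delta(A,B)-z)^{-1}$ edge by edge. On each half line the free Green's function is $\frac{\mathrm{i}}{2k}\mathrm{e}^{\mathrm{i} k|x-y|}$ with $k=\sqrt{z}$, $\mathrm{Im}\, k>0$; imposing the vertex condition $A\underline{u}+B\underline{u'}=0$ forces an ansatz
\[
G_{jk}(x,y;z) = \frac{\mathrm{i}}{2k}\Bigl[\delta_{jk}\,\mathrm{e}^{\mathrm{i} k|x-y|} + S_{jk}(k)\,\mathrm{e}^{\mathrm{i} k(x+y)}\Bigr],
\]
where the $n\times n$ vertex scattering matrix is
\[
S(k)=-(A+\mathrm{i} k B)^{-1}(A-\mathrm{i} k B).
\]
Hypotheses \ref{H1}--\ref{H2} are precisely what is needed to make $A+\mathrm{i} k B$ invertible for $k>0$ and to render $S(k)$ unitary on the real axis. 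Stone's formula together with the change of variables $\lambda=k^2$ then yields, on each pair of edges $(j,k)$, a propagator kernel that splits as the free one-dimensional Schr\"odinger kernel $\delta_{jk}(4\pi\mathrm{i} t)^{-1/2}\mathrm{e}^{\mathrm{i}(x-y)^2/(4t)}$ plus the oscillatory correction
\[
\frac{1}{2\pi}\int_0^\infty \mathrm{e}^{-\mathrm{i} t k^2 + \mathrm{i} k(x+y)}\,S_{jk}(k)\,dk.
\]
The free piece already satisfies the desired $|t|^{-1/2}$ bound uniformly in $x,y\geq 0$, so the entire question reduces to this second term.

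The main obstacle is therefore to control the above oscillatory integral uniformly in $x,y\geq 0$. The phase $\phi(k)=-tk^2+(x+y)k$ has a unique real critical point at $k_0=(x+y)/(2t)$ with $|\phi''|=2|t|$, so a van der Corput estimate gives the $|t|^{-1/2}$ decay as soon as the symbol $S_{jk}$ is $C^1$ and uniformly bounded together with its derivative on $[0,\infty)$. Away from the origin this is immediate from the rational formula for $S(k)$. The delicate point is near $k=0$, where $A+\mathrm{i} k B$ may degenerate (for instance in the Kirchhoff case $A$ has rank $n-1$ and $B$ rank one); here one has to invoke the equivalent description of the boundary conditions recalled before \eqref{formdomain}: in the orthogonal block decomposition $\mathbb{C}^n=P_D\mathbb{C}^n\oplus P_N\mathbb{C}^n\oplus P_R\mathbb{C}^n$ the scattering matrix reduces to $-I$ on the Dirichlet block, to $+I$ on the Neumann block, and to a Cayley-type transform of $\Lambda$ on the Robin block, each piece being smooth and uniformly bounded together with its $k$-derivative on $[0,\infty)$ because $\Lambda$ is invertible on $P_R\mathbb{C}^n$. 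Feeding this symbol control into the van der Corput bound closes the dispersive estimate; interpolation with $L^2$ conservation yields (i), and the Keel--Tao machinery delivers all three Strichartz inequalities in (ii).
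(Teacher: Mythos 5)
Your proposal follows essentially the same route as the paper's proof: the Kostrykin--Schrader resolvent kernel, Stone's formula with the change of variables $\lambda=k^2$ to produce an oscillatory integral in $k$, a van der Corput bound for the $L^1\to L^\infty$ decay, Riesz--Thorin interpolation against $L^2$ unitarity for (i), and the Keel--Tao theorem for (ii). Your treatment of the symbol near $k=0$ via the projector decomposition $P_D,P_N,P_R$ and the Cayley transform of $\Lambda$ is a legitimate variant of the paper's argument, which instead works with the entries of $G(k,A,B)$ as quotients of polynomials and uses unitarity of $G$ on $\mathbb{R}\setminus\{0\}$ to compare degrees (with a factorization isolating possible zeros at the origin).

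There is, however, one step where your stated sufficient condition is too weak. You claim the van der Corput estimate closes ``as soon as the symbol $S_{jk}$ is $C^1$ and uniformly bounded together with its derivative on $[0,\infty)$.'' On an unbounded $k$-interval the relevant corollary of van der Corput's lemma bounds the oscillatory integral by $|t|^{-1/2}\bigl(\|S_{jk}\|_{L^\infty}+\|S_{jk}'\|_{L^1}\bigr)$; a merely bounded, non-integrable derivative leaves the error term $\int_0^\infty |S_{jk}'|$ uncontrolled and the argument does not close. This is precisely what Lemma \ref{uniform} and the first step of the paper's proof supply: from the bound \eqref{bound} and the degree comparison of the numerator and denominator polynomials one gets $\sup_{k}|G_{jj'}(k)|\le 1$ \emph{and} $\int_{\mathbb{R}}|G_{jj'}'(k)|\,dk\le C(A,B)$. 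Your own block formula would in fact deliver the same $L^1$ control (the Robin block $-(\Lambda+\mathrm{i}k)^{-1}(\Lambda-\mathrm{i}k)$ has derivative decaying like $k^{-2}$, and the Dirichlet/Neumann blocks are constant), so the gap is reparable within your framework, but the $L^1$ bound on the derivative must be stated and proved, not replaced by an $L^\infty$ bound. A secondary omission: the passage from Stone's formula to the explicit kernel requires justifying the boundary values $\delta\to 0$ of the resolvent and the interchanges of limits and integrals; the paper devotes the uniform strip estimate \eqref{bound_strip} and Proposition \ref{explicitsol} to making dominated convergence and Fubini legitimate there, and your sketch takes this for granted.
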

In all the estimates above one requires the projection onto the absolutely \corrected{continuous} spectrum as the Laplace operator $-\Delta(A,B)$ on $L^2(\mathcal{G})$ may posses non-empty point spectrum, for which the time decay in \eqref{estimate 1} of the unitary group $\mathrm{e}^{\mathrm{i}t \Delta(A,B)}$ cannot occur and then the global estimates in the second part do not hold.
If we need only local in time estimates, as in the case of the proof of the local in time existence of solutions for nonlinear problems, the restriction on $\mathcal{P}_{ac}(-\Delta(A,B)) $ is not required and the following holds true.
\begin{proposition} \label{corollaryA1}
For every $\alpha \geq 1 $, 
\mathleft
\begin{equation*}\label{estimate.2}
\| \mathrm{e}^{\mathrm{i} t \Delta(A,B)} u_0 \|_{L^q((-T,T),L^{r}(\mathcal{G}))} \lesssim \big\|u_0 \big\|_{L^2(\mathcal{G})} +C(\alpha,r) (2T)^{1/q} \big\|u_0 \big\|_{L^{\alpha}(\mathcal{G})}
\end{equation*}
and
\begin{multline*}\label{estimate.4}
\bigg\| \int_0^t \mathrm{e}^{\mathrm{i} (t-s) \Delta(A,B)} F(s,\cdot) ds \bigg\|_{L^q((-T,T),L^{r}(\mathcal{G}))} \lesssim ||F ||_{L^{\tilde{q}'}((-T,T)), L^{\tilde{r}'}(\mathcal{G}))}\\
+ C(\alpha,r)(2T)^{1/q} ||F ||_{L^{1}((-T,T)), L^{\alpha}(\mathcal{G}))}.
\end{multline*}
\end{proposition}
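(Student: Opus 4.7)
The plan is to reduce the statement to Theorem~\ref{theorem 1} by splitting the initial data along the spectral decomposition of $-\Delta(A,B)$. The spectral analysis carried out in Section~\ref{section 3} shows that on a star-graph this operator has no singular continuous spectrum, its essential spectrum equals $[0,\infty)$, and its point spectrum consists of finitely many negative eigenvalues $\{\lambda_j\}_{j=1}^N$. Each corresponding eigenfunction $\phi_j$ solves $-\phi_j''=\lambda_j\phi_j$ on every edge, so the $L^2$ condition forces $\phi_j|_{I_k}(x)=c_{j,k}\,e^{-\sqrt{-\lambda_j}\,x}$, and hence $\phi_j\in L^p(\mathcal{G})$ for every $p\in[1,\infty]$. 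Writing $u_0=\mathcal{P}_{ac}u_0+\mathcal{P}_{pp}u_0$ with $\mathcal{P}_{pp}:=I-\mathcal{P}_{ac}$, both projections commute with the propagator $\mathrm{e}^{\mathrm{i}t\Delta(A,B)}$.

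For the absolutely continuous component I would invoke Theorem~\ref{theorem 1} directly: since $\mathcal{P}_{ac}$ is an $L^2$-contraction, the homogeneous Strichartz bound yields
\[
\|\mathrm{e}^{\mathrm{i}t\Delta(A,B)}\mathcal{P}_{ac}u_0\|_{L^q((-T,T),L^r(\mathcal{G}))}\lesssim \|u_0\|_{L^2(\mathcal{G})}.
\]
For the pure-point component, the key observation is that $\mathcal{P}_{pp}$ is finite-rank with smooth, exponentially decaying range. Using H\"older with $\alpha'=\alpha/(\alpha-1)$ together with $\phi_j\in L^{\alpha'}\cap L^r$, one obtains, uniformly in $t$,
\[
\|\mathrm{e}^{\mathrm{i}t\Delta(A,B)}\mathcal{P}_{pp}u_0\|_{L^r(\mathcal{G})}=\Bigl\|\sum_{j=1}^N e^{-\mathrm{i}t\lambda_j}\langle u_0,\phi_j\rangle\,\phi_j\Bigr\|_{L^r(\mathcal{G})}\leq C(\alpha,r)\,\|u_0\|_{L^\alpha(\mathcal{G})},
\]
with $C(\alpha,r)=\sum_j \|\phi_j\|_{L^{\alpha'}}\|\phi_j\|_{L^r}$. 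Taking the $L^q((-T,T))$ norm in time produces the factor $(2T)^{1/q}$, and adding the two contributions yields the first inequality.

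The inhomogeneous estimate follows by applying the same splitting to $F$. For $\mathcal{P}_{ac}F$ the inhomogeneous Strichartz inequality of Theorem~\ref{theorem 1} delivers the first term on the right-hand side. For $\mathcal{P}_{pp}F$, Minkowski's inequality combined with the pointwise $L^\alpha\to L^r$ bound above gives
\[
\Bigl\|\int_0^t \mathrm{e}^{\mathrm{i}(t-s)\Delta(A,B)}\mathcal{P}_{pp}F(s)\,ds\Bigr\|_{L^r(\mathcal{G})}\leq C(\alpha,r)\int_{-T}^{T}\|F(s)\|_{L^\alpha(\mathcal{G})}\,ds,
\]
and integrating this $t$-independent bound in $L^q((-T,T))$ adds the $(2T)^{1/q}$ factor.

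The only non-trivial input is the structural statement that $-\Delta(A,B)$ has only finitely many eigenvalues, each with an exponentially decaying eigenfunction; this is the step I expect to require the most care. I would extract it from the resolvent representation developed in Section~\ref{section 3}, where the eigenvalues appear as zeros of a finite determinant depending analytically on $\kappa=\sqrt{-\lambda}$, which rules out accumulation at $0$ or at $-\infty$. Everything else is a routine triangle-inequality combination of Theorem~\ref{theorem 1} with the elementary bound on the finite-rank projection $\mathcal{P}_{pp}$.
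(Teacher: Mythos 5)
Your proposal is correct and follows essentially the same route as the paper: decompose via $\mathcal{P}_{ac}$ and the finite-rank point-spectrum projection, apply Theorem~\ref{theorem 1} to the first piece, and bound the second piece using H\"older together with the fact that the finitely many eigenfunctions are exponential on each edge and hence lie in every $L^p(\mathcal{G})$ (the paper takes this finiteness from Kostrykin--Schrader rather than re-deriving it from the resolvent, but that is only a difference of citation). The $(2T)^{1/q}$ factor and the Minkowski step for the inhomogeneous term also match the paper's argument.
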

We emphasize that choosing $T<1$, $\alpha=2$ and $\alpha=\tilde r'$ in the first respectively second estimate we obtain local Strichartz estimates similar to the ones in the classical case.

\medskip

 As an application of Corollary A\ref{corollaryA1}, we give the following well-posedness \corrected{results}. We recall that the nonlinearity in \eqref{system nonlinear} is understood component-wise, i.e. $|u \vert^{p-1} u = (
           \vert u_{1} \vert ^{p-1} u_{1},\dots,           \vert u_{n} \vert ^{p-1} u_{n})^T.$
\begin{theorem}\label{theorem 2}
For any $p\in (1,5)$, \corrected{$\lambda \in \mathbb{R}$} and for every $u_0 \in L^2(\mathcal{G})$, there exists a unique mild solution $u \in C(\mathbb{R}, L^2(\mathcal{G})) \underset{(q,r)-adm}{\medcap} L^q_{loc}(\mathbb{R}, L^r(\mathcal{G})) $ of the nonlinear Schr\"{o}dinger equation \eqref{system nonlinear}
where the intersection  is taken over all sharp ${1}/{2}-$admissible exponent pairs.
 Moreover, the $L^2(\mathcal{G})$-norm of $u$ is preserved along time:
$$ \| u(t) \|_{L^2(\mathcal{G})}=\| u_0 \|_{L^2(\mathcal{G})} ,\quad \forall \ t\in \rr.$$
\end{theorem}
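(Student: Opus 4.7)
The scheme is the classical Kato fixed-point argument on a Strichartz-adapted space, followed by global extension via mass conservation. The subcriticality $p<5$ enters at exactly the point where the nonlinear contribution must be tamed by a small factor $T^{\theta}$.

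\textbf{Step 1 --- choice of functional framework.} Pick $(q,r) := \bigl(\tfrac{4(p+1)}{p-1},\, p+1\bigr)$. It satisfies \eqref{onehalf} and is therefore a sharp $1/2$--admissible pair; the elementary equivalence $q>p+1 \Longleftrightarrow p<5$ will be decisive. For $R,T>0$ introduce the complete metric space
\[
X_{T,R} := \Bigl\{\,u \in C([-T,T];L^2(\mathcal{G})) \cap L^q\bigl((-T,T);L^{p+1}(\mathcal{G})\bigr)\ :\ \|u\|_{L^\infty_tL^2_x} + \|u\|_{L^q_tL^{p+1}_x}\le R\,\Bigr\},
\]
equipped with the natural distance, and the Duhamel map
\[
\Phi(u)(t) := \mathrm{e}^{\mathrm{i}t\Delta(A,B)}u_0 \,+\, \mathrm{i}\lambda\int_0^t \mathrm{e}^{\mathrm{i}(t-s)\Delta(A,B)}\,|u(s)|^{p-1}u(s)\,ds.
\]

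\textbf{Step 2 --- local contraction.} The linear piece is controlled by unitarity in $L^\infty_tL^2_x$ and by Corollary A\ref{corollaryA1} (with $\alpha=2$) in $L^q_tL^{p+1}_x$. For the Duhamel integral, the pointwise identity $\||u|^{p-1}u\|_{L^{(p+1)/p}_x} = \|u\|_{L^{p+1}_x}^p$ combined with H\"older in time on $[-T,T]$ yields
\[
\||u|^{p-1}u\|_{L^{q'}((-T,T);L^{(p+1)/p}(\mathcal{G}))} \le (2T)^\theta\, \|u\|_{L^q((-T,T);L^{p+1}(\mathcal{G}))}^p, \qquad \theta := \frac{1}{pq'} - \frac{1}{q} > 0,
\]
where positivity of $\theta$ is precisely the condition $p<5$. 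Feeding this into the inhomogeneous (local) Strichartz estimate from Theorem \ref{theorem 1}(ii) and Corollary A\ref{corollaryA1}, and using $\bigl||z|^{p-1}z - |w|^{p-1}w\bigr| \lesssim (|z|^{p-1}+|w|^{p-1})|z-w|$, one obtains
\[
\|\Phi(u)\|_{X_{T,R}} \le C_1\|u_0\|_{L^2} + C_2 T^\theta R^p,\qquad \|\Phi(u)-\Phi(v)\|_{X_{T,R}} \le C_3 T^\theta R^{p-1}\|u-v\|_{X_{T,R}}.
\]
Choosing $R := 2C_1\|u_0\|_{L^2}$ and then $T$ small enough (depending only on $\|u_0\|_{L^2}$) makes $\Phi$ a contraction, producing a unique local mild solution on $[-T,T]$. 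Re-inserting this solution into the Duhamel formula and invoking the homogeneous/inhomogeneous Strichartz estimates of Theorem \ref{theorem 1}(ii) for an arbitrary admissible pair $(q_1,r_1)$ yields membership in every $L^{q_1}_{\mathrm{loc}}(\mathbb{R};L^{r_1}(\mathcal{G}))$ space appearing in the statement.

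\textbf{Step 3 --- $L^2$-conservation and globalization; main obstacle.} Since the lifespan furnished by Step 2 depends only on $\|u_0\|_{L^2}$, global existence reduces to showing that $t\mapsto \|u(t)\|_{L^2}$ is constant. For regular data $u_0 \in D(\Delta(A,B))$, standard regularity propagation (the nonlinearity maps $H^2(\mathcal{G})$ into itself compatibly with the coupling conditions, via Sobolev embedding on each edge) upgrades $u$ to a strong solution; differentiating $\|u(t)\|_{L^2}^2$ along the flow and using self-adjointness of $\Delta(A,B)$ together with the gauge-invariance $\operatorname{Re}\langle\mathrm{i}|u|^{p-1}u,u\rangle=0$ gives exact conservation. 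For an arbitrary $u_0\in L^2(\mathcal{G})$, density together with the $L^2$-continuous dependence built into the contraction transfers the identity to the limit, producing a global solution. The chief obstacle in this program is extracting the honest $T^\theta$-gain from the nonlinear term in Step 2, as Corollary A\ref{corollaryA1} is a \emph{local-in-time} Strichartz estimate carrying an unavoidable residual term; this gain rests on the H\"older-in-time slack between the exponents $q$ and $pq'$, available precisely in the subcritical range $p<5$.
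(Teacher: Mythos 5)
Your Steps 1 and 2 follow the paper's Step I--II essentially verbatim: same admissible pair $(q_0,r_0)=\bigl(4(p+1)/(p-1),\,p+1\bigr)$, same contraction on $C([-T,T];L^2(\mathcal{G}))\cap L^{q_0}L^{r_0}$ driven by Corollary A\ref{corollaryA1}, same H\"older-in-time gain (though your exponent $\theta=\tfrac{1}{pq'}-\tfrac1q$ should be multiplied by $p$, since $\||u|^{p-1}u\|_{L^{q'}_tL^{(p+1)/p}_x}=\|u\|^p_{L^{pq'}_tL^{p+1}_x}$; this gives the paper's $T^{(5-p)/4}$ and does not affect positivity). The genuine problem is Step 3. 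You justify $L^2$-conservation by propagating $H^2$-regularity from data $u_0\in \mathcal{D}(\Delta(A,B))$, asserting that ``the nonlinearity maps $H^2(\mathcal{G})$ into itself compatibly with the coupling conditions.'' For general Nevanlinna pairs $(A,B)$ this is false, and it is precisely the difficulty the paper singles out in Section \ref{section 2}: a function $u$ satisfying $A\underline{u}+B\underline{u'}=0$ (or even just $P_D\underline{u}=0$) need not satisfy the same vertex condition after applying $g(u)=\lambda|u|^{p-1}u$, except in special cases such as $\delta$ or $\delta'$ coupling. Hence $g$ does not leave $\mathcal{D}(\Delta(A,B))$ (nor even the form domain $D(\mathcal{E})$) invariant, the Duhamel integral does not stay in $\mathcal{D}(H)$, and the ``strong solution for regular data'' step collapses exactly in the general setting the theorem addresses.

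The paper circumvents this by regularizing the solution rather than the data: one sets $u_\varepsilon=(I+\varepsilon^2H)^{-1}u$, which commutes with $\mathrm{e}^{-\mathrm{i}tH}$, lands in $C([-T,T],\mathcal{D}(H))$, and solves the equation with forcing $(I+\varepsilon^2H)^{-1}g(u)\in L^1((-T,T),\mathcal{D}(H))$; then $t\mapsto\|u_\varepsilon(t)\|^2_{L^2(\mathcal{G})}$ is absolutely continuous, its derivative can be computed, and Lemma \ref{regularization} (uniform $L^p$-boundedness and strong convergence of $(I+\varepsilon^2H)^{-1}$) allows passage to the limit $\varepsilon\to0$, where $\Re\,\mathrm{i}\langle g(u),u\rangle=0$ yields conservation. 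Your final density remark does not repair the gap, because the dense class on which you claim exact conservation is exactly the class for which your argument fails. To keep your outline you must replace the regularity-propagation paragraph by a resolvent-regularization (or a vertex-compatible truncation of the nonlinearity); as written, the conservation law --- and with it globalization --- is unproven.
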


\begin{theorem}\label{theorem 3}
\corrected{
For any $p\in (1,5)$, \corrected{$\lambda \in \mathbb{R}$} and for every $u_0$ in the energy domain $ D(\mathcal{E})$, there exists a weak solution $u \in C(\mathbb{R}, D(\mathcal{E})) \cap   C^1(\mathbb{R}, D(\mathcal{E})^*)$ of equation \eqref{system nonlinear}.
 Moreover, the $L^2$--norm and the energy 
$$E(u (t)):= \mathcal{E}(u(t),u(t)) - \dfrac{\lambda}{p+1} \mathlarger\int_{\mathcal{G}} |u (t,x)|^{p+1} \, dx $$ are conserved for all $t \in \mathbb{R}$.}
\end{theorem}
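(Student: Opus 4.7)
The plan is to construct the solution by a fixed-point argument in the form domain $D(\mathcal{E})$, exploiting the local Strichartz estimates of Corollary A\ref{corollaryA1} together with the equivalence \eqref{equivnorms} between the $D(\mathcal{E})$-norm and the $H^1(\mathcal{G})$-norm. The first observation is that, on a star-graph, each edge being a half-line, the embedding $H^1(\mathcal{G})\hookrightarrow L^\infty(\mathcal{G})$ is continuous, so that $f(u)=\lambda|u|^{p-1}u$ is locally Lipschitz from $D(\mathcal{E})$ into itself: the Dirichlet condition $P_D\underline{u}=0$ is preserved since $f(u)_j(0)=0$ whenever $u_j(0)=0$, and $H^1$-regularity of $f(u)$ follows from the chain rule combined with the $L^\infty$-bound $|u|\leq\|u\|_{L^\infty}\lesssim\|u\|_{D(\mathcal{E})}$.

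For local existence I would apply Banach's fixed point theorem to the Duhamel map
\[
T(u)(t)=\mathrm{e}^{\mathrm{i}t\Delta(A,B)}u_0+\mathrm{i}\lambda\int_0^t\mathrm{e}^{\mathrm{i}(t-s)\Delta(A,B)}|u|^{p-1}u(s)\,ds
\]
on a ball $Y_T=\{u\in L^\infty([-T,T],D(\mathcal{E})):\|u\|_{L^\infty_tD(\mathcal{E})}\leq 2\|u_0\|_{D(\mathcal{E})}\}$, equipped with the complete distance $d(u,v)=\|u-v\|_{L^\infty_tL^2(\mathcal{G})}$. A crucial ingredient is that the propagator $\mathrm{e}^{\mathrm{i}t\Delta(A,B)}$ is an isometry on $D(\mathcal{E})$ in the form norm: by the spectral theorem $(M-\Delta(A,B))^{1/2}$ is an isomorphism $D(\mathcal{E})\to L^2(\mathcal{G})$ that commutes with the group. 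This reduces energy-level Strichartz bounds to the $L^2$-level estimates of Corollary A\ref{corollaryA1} and yields both stability of $Y_T$ and contractivity in $d$ for $T=T(\|u_0\|_{D(\mathcal{E})})$ small enough. The conservation laws are first verified for $u_0\in\mathcal{D}(\Delta(A,B))$, where the solution is classical in $C^1(\mathbb{R},L^2(\mathcal{G}))$, by differentiating $\|u(t)\|_{L^2}^2$ and $E(u(t))$ in time and invoking the self-adjointness of $\Delta(A,B)$ on its domain; both identities then extend to $u_0\in D(\mathcal{E})$ by density and continuous dependence on initial data.

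Global extension follows from a priori $D(\mathcal{E})$-bounds. The boundary contribution in $\mathcal{E}$ is controlled by the trace inequality $|u_j(0)|^2\leq 2\|u_j\|_{L^2}\|u_j'\|_{L^2}$, giving, for $M$ large enough, a lower bound $\mathcal{E}(u,u)+M\|u\|_{L^2}^2\gtrsim\|u\|_{H^1}^2$. The sub-critical Gagliardo--Nirenberg inequality
\[
\|u\|_{L^{p+1}(\mathcal{G})}^{p+1}\leq C\|u'\|_{L^2(\mathcal{G})}^{(p-1)/2}\|u\|_{L^2(\mathcal{G})}^{(p+3)/2},
\]
whose exponent $(p-1)/2<2$ for $p<5$, allows the nonlinear energy term to be absorbed into the kinetic term, so that conservation of $E(u(t))$ and $\|u(t)\|_{L^2}$ gives a time-independent bound on $\|u(t)\|_{D(\mathcal{E})}$. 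Iterating the local argument then yields a global solution in $C(\mathbb{R},D(\mathcal{E}))$, and the equation itself promotes this to $C^1(\mathbb{R},D(\mathcal{E})^*)$.

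The main technical obstacle is the transfer of Strichartz estimates from the $L^2$-level to the energy level: unlike on $\mathbb{R}$, one cannot simply differentiate the equation in space because spatial derivatives do not preserve the coupling conditions encoded by $(A,B)$. The functional calculus detour via $(M-\Delta(A,B))^{1/2}$ circumvents this, at the cost of checking that the nonlinearity, composed with this operator, still satisfies the Lipschitz estimate in the right function spaces; this is precisely where the embedding $D(\mathcal{E})\hookrightarrow H^1(\mathcal{G})\hookrightarrow L^\infty(\mathcal{G})$ and the sub-criticality $p<5$ become essential.
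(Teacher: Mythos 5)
Your argument breaks down at its very first step: the claim that $f(u)=\lambda|u|^{p-1}u$ maps $D(\mathcal{E})$ into itself because ``$f(u)_j(0)=0$ whenever $u_j(0)=0$''. The Dirichlet condition in \eqref{formdomain} is $P_D\underline{u}=0$ for an \emph{arbitrary} orthogonal projector $P_D$ on $\mathbb{C}^n$, i.e.\ the boundary vector $\underline{u}=(u_1(0),\dots,u_n(0))$ must lie in the linear subspace $\ker P_D$; it is \emph{not} a componentwise vanishing condition. The componentwise map $z\mapsto|z|^{p-1}z$ does not preserve a general linear subspace of $\mathbb{C}^n$ (e.g.\ if $\ker P_D=\{(a,b,c):a=2b\}$, then $\underline{u}=(2,1,0)$ is admissible but $(2^p,1,0)$ is not for $p>1$). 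This is precisely the obstruction the paper singles out in Section~\ref{section 2}: $P_D\underline{u}=0$ does not imply $P_D\underline{g(u)}=0$ except in particular cases such as $\delta$- and $\delta'$-coupling, and this is exactly why a direct fixed-point argument in $D(\mathcal{E})$ — the route you take, and the one used in the cited works for those special couplings — is not available here. Your later verification of the conservation laws for $u_0\in\mathcal{D}(\Delta(A,B))$ inherits the same problem, since the Duhamel term then fails to stay in the domain.

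The paper's proof is structured entirely around circumventing this: it replaces $g$ by the regularized nonlinearity $g_\varepsilon(v)=J_\varepsilon g(J_\varepsilon v)$ with $J_\varepsilon=(I+\varepsilon H)^{-1}$, which \emph{does} map $D(\mathcal{E})$ to itself and is Lipschitz on bounded sets of $L^2(\mathcal{G})$, so that Cazenave's abstract theorem yields a global solution $u^\varepsilon\in C(\mathbb{R},D(\mathcal{E}))\cap C^1(\mathbb{R},D(\mathcal{E})^*)$ with conserved (regularized) mass and energy. Uniform-in-$\varepsilon$ bounds in $D(\mathcal{E})$ are then obtained from the Gagliardo--Nirenberg inequality (this part of your a priori estimate is essentially the same as the paper's), the sequence $u^\varepsilon$ is shown to converge in $C([-T,T],L^2(\mathcal{G}))$ to the $L^2$-solution $u$ of Theorem~\ref{theorem 2}, and the regularity, energy conservation and continuity in $D(\mathcal{E})$ of $u$ are recovered in the limit by weak lower semicontinuity. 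Note also that the theorem only asserts existence of a weak solution, not uniqueness — a symptom of the fact that the contraction argument you propose is not performed at the energy level. To salvage your approach you would need either to restrict to couplings for which $\ker P_D$ is invariant under the componentwise nonlinearity, or to introduce a regularization of the type the paper uses.
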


\corrected{
We strongly believe that local well-posedness results can be extended to the whole class of power nonlinearities, $1\leq p<\infty$, following similar arguments as in the one-dimensional case \cite[Chapter 5.2]{linares}.  Concerning the conditions that guarantee the global well-posedness in the energy space, we refer to \cite[Theorem 6.2, p.128]{linares}. }

\corrected{
The main difference between the sub-critical case $p<5$ and the super-critical case $p> 5$ comes from the fact that in the first one we can start with the $L^2$-solution $u$ given by Theorem \ref{theorem 2}   and prove that if the initial data is more regular, i.e. in $D(\mathcal{E})$, then $u$ belongs also to $D(\mathcal{E})$. The main difficulty even in this particular case is due to the fact that the nonlinearity $g(u):=\lambda |u|^{p-1}u$ does not leave invariant the space $D(\mathcal{E})$. In the particular cases considered in \cite{golo,pava2017orbital}, i.e. $\delta$ and $\delta'$-coupling, the energy domain $D(\mathcal{E})$  is preserved by the nonlinearity and then the fix point argument easily applies to obtain the local existence of the mild solutions. However, in the case of general coupling conditions as the ones we are interested in this work,
a function $u\in D(\mathcal{E})$ has the property that $P_D \underline{u}=0$. It is clear that $P_D \underline{u}=0$ does not necessarily imply that $P_D \underline{g(u)}=0$, excepting particular cases.
}
           
The study of nonlinear propagation in ramified structures is of relevance in several branches of pure and applied science, modeling phenomena such as nonlinear electromagnetic pulse propagation in optical fibers, the hydrodynamic flow, electrical signal propagation in the nervous system, etc. For a large classification of applications and references, we refer to   \cite[Chapter 7]{berkolaiko}. We also point out here that the modelization depends on the real network, where each edge has a thickness, but usually idealizations of these graphs are considered. The convergence of these so-called graph-like spaces to metric graphs (with 0-thickness limit) \corrected{ is analyzed in \cite{ olafexner2,kuchmentzeng,molchanov,molchanovconvergence,exnerpost,olaf,rubinstein,smilanskysolomyak,matrasulov}}.\\

\section{Proofs of the main results} \label{section 3}

\subsection{The solution via the resolvent}

\corrected{In the sequel we make use of the explicit representation of the integral resolvent kernel, given in terms of the boundary condition matrices, $A$ and $B$. We recall below the result stated by Kostrykin and Schrader in \cite{laplacians}. Nevertheless, it is worth mentioning that it basically relies on an analog of the classical Krein's formula \cite[Theorem~1.2.1]{albeveriokurasov}, which was obtained both in \cite{albeveriokrein} and in \cite{kostrykinkirchhoff}.}

\begin{lem}\cite[Lemma~4.2]{laplacians} \label{lemma.kernel} The resolvent $(-\Delta(A,B)-\mathrm{k}^2)^{-1}$, for $\mathrm{k}^2 \in \mathbb{C} \setminus \sigma(-\Delta(A,B))$, $\Im (\mathrm{k}) > 0$,  is the integral operator with the $n \times n$ matrix-valued integral kernel $\mathrm{r}(x,y,\mathrm{k})$, admitting the representation
\begin{equation}
\mathrm{r}(x,y,\mathrm{k})=\mathrm{r}^{(0)}(x,y,\mathrm{k}) + \dfrac{\mathrm{i}}{2\mathrm{\mathrm{k}}} \phi(x,\mathrm{k}) G(\mathrm{k},A,B)\phi(y,\mathrm{k}), \label{kernel}
\end{equation}

\noindent with  $[\mathrm{r}^{(0)}(x,y,\mathrm{k})]_{j,j'}=\dfrac{\mathrm{i}}{2\mathrm{k}} \delta_{j,j'} \mathrm{e}^{\mathrm{i} \mathrm{k} |x_j-y_{j'} |}, \ x_j \in I_j, y_{j'} \in I_{j'}, \  \phi(x,\mathrm{k})= \mathrm{diag} \{ \mathrm{e}^{\mathrm{i} \mathrm{k} x_j} \}_{j=\overline{1,n}},\ \phi(y,\mathrm{k})= $ 

\vspace{0.1cm}
\noindent $\mathrm{diag}\{\mathrm{e}^{\mathrm{i} \mathrm{k}  y_j} \}_{j=\overline{1,n}}$ and $ G(\mathrm{k},A,B)=-(A+\mathrm{i} \mathrm{k} B)^{-1}(A-\mathrm{i} \mathrm{k} B).$

\end{lem}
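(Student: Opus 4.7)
The plan is to construct $u := (-\Delta(A,B)-\mathrm{k}^2)^{-1} f$ for a generic $f \in L^2(\mathcal{G})$ by solving $-u_j''-\mathrm{k}^2 u_j = f_j$ on each half-line $I_j$ under the vertex condition $A\underline{u}+B\underline{u'}=0$, and then reading off the $n\times n$ integral kernel. On each edge I would start from the $L^2$-convergent particular solution
\begin{equation*}
w_j(x) = \frac{\mathrm{i}}{2\mathrm{k}}\int_0^\infty \mathrm{e}^{\mathrm{i}\mathrm{k}|x-y|} f_j(y)\,dy,
\end{equation*}
namely the convolution of $f_j$ with the free whole-line fundamental solution; this will supply the diagonal piece $\mathrm{r}^{(0)}$ of \eqref{kernel}. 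Since $\Im\mathrm{k}>0$, the only $L^2$ homogeneous solution on $[0,\infty)$ is a scalar multiple of $\mathrm{e}^{\mathrm{i}\mathrm{k}x}$, so I may write $u = w+\phi(\cdot,\mathrm{k})\vec{c}$ for an unknown vector $\vec{c}\in\mathbb{C}^n$ to be determined by the coupling.

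The key identity is $\underline{w'}(0_+)=-\mathrm{i}\mathrm{k}\,\underline{w}(0)$, which follows from differentiating the splitting $\int_0^x + \int_x^\infty$ under the integral sign and letting $x\to 0_+$. Substituting $\underline{u}=\underline{w}(0)+\vec{c}$ and $\underline{u'}=-\mathrm{i}\mathrm{k}\,\underline{w}(0)+\mathrm{i}\mathrm{k}\vec{c}$ into the coupling condition collapses it to
\begin{equation*}
(A+\mathrm{i}\mathrm{k}B)\vec{c} = -(A-\mathrm{i}\mathrm{k}B)\underline{w}(0),
\end{equation*}
so $\vec{c} = G(\mathrm{k},A,B)\,\underline{w}(0)$ in the notation of the lemma. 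Writing the vector $\underline{w}(0)$ componentwise as the integral of $\frac{\mathrm{i}}{2\mathrm{k}}\phi(y,\mathrm{k}) f(y)$ over $\mathcal{G}$ and recombining $u = w+\phi(\cdot,\mathrm{k})\vec{c}$ yields exactly the kernel \eqref{kernel}.

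The main obstacle is the invertibility of $A+\mathrm{i}\mathrm{k}B$, which is required in order for $\vec{c}$ to be well defined. I would address this via the following dichotomy: any nonzero $\xi\in\ker(A+\mathrm{i}\mathrm{k}B)$ provides the exponentially decaying function $\phi(\cdot,\mathrm{k})\xi\in L^2(\mathcal{G})$, which solves $-u''=\mathrm{k}^2 u$ on each edge and satisfies $A\underline{u}+B\underline{u'}=A\xi+B(\mathrm{i}\mathrm{k}\xi)=0$, hence is an $L^2$-eigenfunction of $-\Delta(A,B)$ at eigenvalue $\mathrm{k}^2$; conversely, every $L^2$-eigenfunction at $\mathrm{k}^2$ must take this form, since the $\mathrm{e}^{-\mathrm{i}\mathrm{k}x}$ modes are not integrable when $\Im\mathrm{k}>0$. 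Hence under the hypothesis $\mathrm{k}^2\notin\sigma(-\Delta(A,B))$ we have $\ker(A+\mathrm{i}\mathrm{k}B)=\{0\}$, the above construction goes through, and by uniqueness the kernel produced is the sought resolvent kernel.
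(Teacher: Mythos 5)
Your construction is correct, and it is essentially the standard derivation of this kernel. Note, however, that the paper does not prove this lemma at all: it is quoted verbatim from Kostrykin--Schrader \cite[Lemma~4.2]{laplacians}, with only the remark that it rests on an analogue of Krein's resolvent formula. So there is no in-paper argument to compare against; what you supply is a self-contained replacement. Your steps all check out: the free half-line particular solution $w_j$ with kernel $\frac{\mathrm{i}}{2\mathrm{k}}\mathrm{e}^{\mathrm{i}\mathrm{k}|x-y|}$ gives $\mathrm{r}^{(0)}$, the identity $\underline{w'}(0_+)=-\mathrm{i}\mathrm{k}\,\underline{w}(0)$ is right (and is exactly what makes the coupling condition collapse to $(A+\mathrm{i}\mathrm{k}B)\vec{c}=-(A-\mathrm{i}\mathrm{k}B)\underline{w}(0)$), and your dichotomy correctly shows that a nontrivial kernel of $A+\mathrm{i}\mathrm{k}B$ would produce an $L^2$-eigenfunction $\phi(\cdot,\mathrm{k})\xi$ at $\mathrm{k}^2$, contradicting $\mathrm{k}^2\notin\sigma(-\Delta(A,B))$. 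The only point worth making explicit is that the constructed $u=w+\phi(\cdot,\mathrm{k})\vec{c}$ lies in $H^2(\mathcal{G})$ and satisfies the vertex condition, hence belongs to $\mathcal{D}(\Delta(A,B))$, so that injectivity of $-\Delta(A,B)-\mathrm{k}^2$ on its domain identifies it with the resolvent applied to $f$; you gesture at this with ``by uniqueness,'' which suffices.
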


We start by localizing the absolutely continuous spectrum of the self-adjoint Laplacian on the star-graph by using some classical results of Weyl and information concerning the eigenvalues, provided by Kostrykin and Schrader.\\

\begin{prop}

Assume $n \times n$ matrices $A$ and $B$ satisfy hypotheses \ref{H1} and \ref{H2}. Then the absolutely continuous spectrum of the corresponding Hamiltonian $-\Delta(A,B)$ with domain given in \eqref{domain} is the interval $[0, \infty)$.

\end{prop}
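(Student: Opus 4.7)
The plan is to reduce the statement to a reference self-adjoint Laplacian whose absolutely continuous spectrum is already known, and then invoke an invariance principle. For the reference operator I take the decoupled Dirichlet Laplacian $H_D := -\Delta(I,0)$, i.e.\ the realization with boundary conditions $u_j(0)=0$ on every edge. The pair $(A,B)=(I,0)$ trivially satisfies \ref{H1}, \ref{H2}, so $H_D$ is self-adjoint. Each summand is unitarily equivalent, through the Fourier sine transform on the half-line, to multiplication by $k^2$ on $L^2(0,\infty)$, so
\[
\sigma(H_D) = \sigma_{ac}(H_D) = [0,\infty).
\]

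Next I use Lemma~\ref{lemma.kernel} twice, once for $-\Delta(A,B)$ and once for $H_D$. Since both resolvent kernels equal $\mathrm{r}^{(0)}(x,y,\mathrm{k})$ plus a correction of the form $\tfrac{\mathrm{i}}{2\mathrm{k}}\phi(x,\mathrm{k}) G(\mathrm{k},\cdot,\cdot)\phi(y,\mathrm{k})$, subtracting gives, for every $\mathrm{k}^2$ in the common resolvent set with $\Im \mathrm{k}>0$,
\[
\bigl(-\Delta(A,B)-\mathrm{k}^2\bigr)^{-1}-\bigl(H_D-\mathrm{k}^2\bigr)^{-1}
   = \tfrac{\mathrm{i}}{2\mathrm{k}} \phi(x,\mathrm{k})\bigl[ G(\mathrm{k},A,B)-G(\mathrm{k},I,0)\bigr]\phi(y,\mathrm{k}).
\]
Because $\phi(\cdot,\mathrm{k})=\mathrm{diag}\{\mathrm{e}^{\mathrm{i}\mathrm{k}x_j}\}$ has exponentially decaying entries when $\Im\mathrm{k}>0$, each column of $\phi(\cdot,\mathrm{k})$ lies in $L^2(\mathcal{G})$. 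Consequently the right-hand side is an integral operator whose range is contained in the finite-dimensional span of these columns, hence of rank at most $2n$, and in particular trace class.

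The conclusion now follows in two classical steps. Weyl's theorem on compact perturbations of the resolvent gives $\sigma_{ess}(-\Delta(A,B))=\sigma_{ess}(H_D)=[0,\infty)$, so in particular $\sigma_{ac}(-\Delta(A,B))\subset[0,\infty)$. The Birman--Kuroda (equivalently, Kato--Rosenblum) theorem, applied to the trace class resolvent difference above, yields the existence of complete wave operators and hence a unitary equivalence between the absolutely continuous parts of $-\Delta(A,B)$ and $H_D$. Therefore
\[
\sigma_{ac}(-\Delta(A,B)) = \sigma_{ac}(H_D) = [0,\infty),
\]
which is the claim.

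The main obstacle is the verification that the correction term in Lemma~\ref{lemma.kernel} genuinely defines a bounded, finite-rank operator on $L^2(\mathcal{G})$; it is really just the observation that $|\mathrm{e}^{\mathrm{i}\mathrm{k}x_j}| = \mathrm{e}^{-(\Im\mathrm{k})x_j}$ is $L^2$ on $[0,\infty)$ for $\Im\mathrm{k}>0$, but it is the point where the star-graph geometry enters. Note that information on the point spectrum of $-\Delta(A,B)$ (finiteness and location in $(-\infty,0]$, as given by Kostrykin--Schrader) is not needed for the absolutely continuous statement: it would only be required to describe the negative part of the full spectrum.
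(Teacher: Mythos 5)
Your proposal is correct, and while it shares its skeleton with the paper's argument (same reference operator, the decoupled Dirichlet Laplacian $-\Delta(I,0)$; same use of Lemma~\ref{lemma.kernel} to write the resolvent difference explicitly; same appeal to Weyl's theorem for the essential spectrum), it diverges at the decisive step. The paper identifies $\sigma_{ac}$ with $\sigma_{ess}$ by invoking the finiteness of the negative point spectrum from \cite[Theorem 3.7]{laplacians} and only shows the resolvent difference is Hilbert--Schmidt; you instead observe that, since $G(\mathrm{k},I,0)=-I$, the difference kernel $\tfrac{\mathrm{i}}{2\mathrm{k}}\phi(x,\mathrm{k})\bigl[G(\mathrm{k},A,B)+I\bigr]\phi(y,\mathrm{k})$ is genuinely finite rank (in fact of rank at most $n$, since it factors through $\mathbb{C}^n$), hence trace class, and then apply Birman--Kuroda to get unitary equivalence of the absolutely continuous parts. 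This buys you something real: equality of the essential spectra alone does not identify the absolutely continuous spectrum, since it leaves open the possibility of singular continuous spectrum or embedded eigenvalues in $[0,\infty)$, and your scattering-theoretic step closes exactly that gap without needing any input on the point spectrum. The paper's route is shorter and stays within basic perturbation theory, but it leans on the implicit absence of singular continuous spectrum; yours is marginally heavier machinery but self-contained on the point that the proposition actually asserts. Both are valid provided one checks, as you implicitly do, that the chosen spectral parameter $\mathrm{k}^2$ lies in both resolvent sets, which holds for $\mathrm{k}^2$ sufficiently negative.
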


We include below the proof of this proposition. However, we would like to mention that we recently encountered a similar result in \cite{schraderabscont}.

\begin{proof}

The operator not being positive definite, the proof is thus not immediate, so we perform a different approach. Since the self-adjoint operator $-\Delta(A,B)$ may have at most a finite collection of negative eigenvalues \cite[Theorem 3.7]{laplacians}, $\sigma_{ac}(-\Delta(A,B))=\sigma_{ess}(-\Delta(A,B))$. Now, the idea is to show first that $\sigma_{ess}(-\Delta(A,B))=\sigma_{ess}(-\Delta(A'=\mathbb{I}_n,B'=\mathbb{O}_n))$, where $-\Delta(A'=\mathbb{I}_n,B'=\mathbb{O}_n)$ is the Hamiltonian with homogeneous Dirichlet boundary conditions, and secondly, that $\sigma_{ess}(-\Delta(A'=\mathbb{I}_n,B'=\mathbb{O}_n))=[0, \infty)$.

Since both Laplacians are self-adjoint (they fulfill \ref{H1} and \ref{H2}), in order to prove the first part, it is enough to show that the resolvent difference $R_{-\Delta(A,B)}-R_{-\Delta(A'=\mathbb{I}_n,B'=\mathbb{O}_n)}$ is a compact operator for some element (and hence, for all) in both resolvent sets, due to Theorem of Weyl in \cite[Theorem~6.19, p.~146]{teschl}. Using each resolvent's representation in terms of its explicit integral kernel expressed in \eqref{kernel}, one can check that the difference of resolvents is a Hilbert Schmidt integral operator for some element in both resolvent sets, and hence, compact (take for instance the spectral parameter $\mathrm{k}^2$ to be any negative real number less than the smallest negative eigenvalue, given in \cite[Remark 3.11]{laplacians}.

For the second part, we will proceed by double inclusion. Since $-\Delta(A'=\mathbb{I}_n,B'=\mathbb{O}_n)$ posses no point spectrum and it is a positive definite self-adjoint operator, $\sigma_{ess}(-\Delta(A'=\mathbb{I}_n,B'=\mathbb{O}_n))=\sigma(-\Delta(A'=\mathbb{I}_n,B'=\mathbb{O}_n))$ and the direct inclusion follows immediately. For the reverse inclusion,  we rely on the criteria of Weyl \cite[Lemma 2.16]{teschl}.
\end{proof}

\noindent {\bf Description of the solution.} By Stone's Theorem in \cite{stone},  which ensures the existence and uniqueness of an unitary group $(\mathrm{e}^{\mathrm{i}t\Delta(A,B)})_{t \in \mathbb{R}}$, the linear Schr\"{o}dinger equation \eqref{system star} is well-posed and the unique global solution is given by $\mathrm{e}^{\mathrm{i}t\Delta(A,B)}u_0$, for all $t \in \mathbb{R}$, and thus, the study of its properties is consistent. In the sequel, we provide an explicit representation of the solution, more precisely, of a sequence of functions converging in $L^2(\mathcal{G})$ to the solution, and hence, almost everywhere along a sub-sequence.\\

A careful analysis of the integral resolvent kernel \eqref{kernel} leads to the following lemma.

\begin{lem}\label{uniform}There exists $\delta_0=\delta_0(A,B)$ such that
\begin{equation}\label{bound_strip}
	\sup_{x,y\in\mathcal{G}, k\in \rr\setminus\{0\},0\leq \delta\leq \delta_0}|\mathrm{k}\mathrm{r}(x,y,\sqrt{\mathrm{k}^2 \pm \mathrm{i} \delta})|<\infty
\end{equation}
Moreover, for any $x,y\in\mathcal{G}$
$$ \lim_{\delta \to 0} \mathrm{k} \mathrm{r}(x,y,\sqrt{\mathrm{k}^2 \pm \mathrm{i} \delta}) = \mathrm{k} \mathrm{r}(x,y,\pm|\mathrm{k}|), \ \forall\, \mathrm{k} \in \rr\setminus\{0\}.$$\
%
%
\end{lem}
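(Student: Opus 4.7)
I would separate the free part from the boundary-condition part: from \eqref{kernel},
\[
\mathrm{k}\,\mathrm{r}(x,y,\mathrm{k})=\frac{\mathrm{i}}{2}\,\mathrm{diag}\bigl\{\mathrm{e}^{\mathrm{i}\mathrm{k}|x_j-y_j|}\bigr\}_{j=1}^{n}+\frac{\mathrm{i}}{2}\,\phi(x,\mathrm{k})\,G(\mathrm{k},A,B)\,\phi(y,\mathrm{k}).
\]
Choosing the principal branch so that $\Re\mathrm{k}\geq 0$, an elementary computation with $\Im\sqrt{a+\mathrm{i}b}=\mathrm{sgn}(b)\sqrt{(\sqrt{a^{2}+b^{2}}-a)/2}$ yields $|\Im\mathrm{k}|\leq\sqrt{\delta/2}$ for $\mathrm{k}=\sqrt{k^{2}\pm\mathrm{i}\delta}$. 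Since $x_j,y_j\geq 0$, each exponential $\mathrm{e}^{\mathrm{i}\mathrm{k}s}$ with $s\geq 0$ has modulus at most $1$; consequently the diagonal piece is bounded by $1/2$ and $\|\phi(x,\mathrm{k})\|,\|\phi(y,\mathrm{k})\|\leq 1$ uniformly, and the lemma reduces to a uniform bound of $\|G(\mathrm{k},A,B)\|$ on the set in question.

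\textbf{Bounding $G$ (the main step).} I would bypass the raw formula $-(A+\mathrm{i}\mathrm{k}B)^{-1}(A-\mathrm{i}\mathrm{k}B)$, whose apparent singularity at $\mathrm{k}=0$ reflects the fact that $A$ is typically not invertible, and work in the projector formulation recalled in the excerpt. Choosing the equivalent representative $(A,B)=(P_D-\Lambda P_R,\,P_N+P_R)$ of the pair, which implements the same boundary conditions $P_D\underline{f}=0$, $P_N\underline{f'}=0$, $P_R\underline{f'}=\Lambda P_R\underline{f}$, the matrix $A+\mathrm{i}\mathrm{k}B$ acts as $1$ on $P_D\mathbb{C}^{n}$, as $\mathrm{i}\mathrm{k}$ on $P_N\mathbb{C}^{n}$, and as $\mathrm{i}\mathrm{k}-\Lambda$ on $P_R\mathbb{C}^{n}$, so that
\[
G(\mathrm{k},A,B)=-P_D+P_N+(\mathrm{i}\mathrm{k}-\Lambda)^{-1}(\mathrm{i}\mathrm{k}+\Lambda)\,P_R.
\]
Diagonalising the self-adjoint invertible operator $\Lambda$ on $P_R\mathbb{C}^{n}$ with real nonzero eigenvalues $\{\lambda_j\}$, the nontrivial block has spectral norm $\max_j|(\mathrm{i}\mathrm{k}+\lambda_j)/(\mathrm{i}\mathrm{k}-\lambda_j)|$. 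From $|\mathrm{i}\mathrm{k}-\lambda_j|^{2}=(\Re\mathrm{k})^{2}+(\Im\mathrm{k}+\lambda_j)^{2}$, choosing $\delta_0$ so that $\sqrt{\delta_0/2}\leq\tfrac12\min_j|\lambda_j|$ forces $|\Im\mathrm{k}+\lambda_j|\geq\tfrac12|\lambda_j|$ (by a case analysis on the sign of $\lambda_j$ and of $\Im\mathrm{k}$), hence $|\mathrm{i}\mathrm{k}-\lambda_j|\geq\tfrac12|\lambda_j|$. Combined with $|\mathrm{i}\mathrm{k}+\lambda_j|\leq|\mathrm{i}\mathrm{k}-\lambda_j|+2|\lambda_j|$ this controls the ratio by a constant depending only on $A$ and $B$, which is \eqref{bound_strip}.

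\textbf{Limit as $\delta\to 0$.} Once boundedness is in hand, the second claim is a continuity argument. The matrices $\mathrm{r}^{(0)}(x,y,\mathrm{k})$ and $\phi(\cdot,\mathrm{k})$ are entire in $\mathrm{k}$, while $G(\mathrm{k},A,B)$ is rational with poles only at the finitely many points $\mathrm{k}=-\mathrm{i}\lambda_j$; the choice of $\delta_0$ above keeps the curves $\delta\mapsto\sqrt{k^{2}\pm\mathrm{i}\delta}$ at positive distance from those poles. Since $\sqrt{k^{2}\pm\mathrm{i}\delta}\to\pm|\mathrm{k}|$ in the two branches as $\delta\to 0^{+}$, the limit may be taken factor by factor and yields the stated identity. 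The main obstacle throughout is the one addressed in the middle paragraph: without the structural projector representation there is no visible reason why the singularity of $(A+\mathrm{i}\mathrm{k}B)^{-1}$ at $\mathrm{k}=0$ does not pollute the bound, and making the invertibility of $\Lambda$ enter through a spectral gap is precisely what provides a \emph{uniform} constant over the full range $k\in\mathbb{R}\setminus\{0\}$.
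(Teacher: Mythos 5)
Your argument is correct in substance but reaches the key bound by a genuinely different route than the paper. The paper works directly with $G(\mathrm{k},A,B)=-(A+\mathrm{i}\mathrm{k}B)^{-1}(A-\mathrm{i}\mathrm{k}B)$: it writes each entry as a quotient of polynomials, invokes the fact that $\det(A+\mathrm{i}\mathrm{k}B)$ vanishes only on the imaginary axis, and then uses the unitarity of $G$ for real $\mathrm{k}\neq 0$ to get $|p_{jj'}/q_{jj'}|\leq 1$ on $\mathbb{R}\setminus\{0\}$, followed by a comparison of the degrees of the monomial factors to control the apparent singularity at $z=0$ and conclude boundedness on a strip $|\Im z|\leq\rho/2$. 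You instead pass to the equivalent pair $(P_D-\Lambda P_R,\,P_N+P_R)$ (legitimate, since $G(\mathrm{k},CA,CB)=G(\mathrm{k},A,B)$ for invertible $C$) and obtain the explicit formula $G=-P_D+P_N+(\mathrm{i}\mathrm{k}-\Lambda)^{-1}(\mathrm{i}\mathrm{k}+\Lambda)P_R$, which reduces everything to scalar M\"obius factors $(\mathrm{i}\mathrm{k}+\lambda_j)/(\mathrm{i}\mathrm{k}-\lambda_j)$ with poles at $\mathrm{k}=-\mathrm{i}\lambda_j$; your spectral-gap choice of $\delta_0$ then gives the uniform bound at once. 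This is cleaner and more transparent than the polynomial-degree bookkeeping, and it makes the location of the poles and the role of the invertibility of $\Lambda$ explicit; the paper's route has the advantage of not needing the projector normal form and of yielding along the way the estimates on $G$ and $G'$ reused in the proof of Theorem \ref{theorem 1}.

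Two small points need repair. First, your branch convention is wrong for the $-\mathrm{i}\delta$ case: with the principal branch ($\Re\sqrt{\cdot}\geq 0$) one has $\Im\sqrt{\mathrm{k}^2-\mathrm{i}\delta}<0$, so $\mathrm{e}^{\mathrm{i}\mathrm{k}s}$ with $s\geq 0$ is \emph{not} bounded by $1$ (the edges are half-lines, so $s$ is unbounded), and moreover $\sqrt{\mathrm{k}^2-\mathrm{i}\delta}\to+|\mathrm{k}|$ rather than the $-|\mathrm{k}|$ required by the statement. You must take the branch $\sqrt{r\mathrm{e}^{\mathrm{i}\theta}}=\sqrt{r}\,\mathrm{e}^{\mathrm{i}\theta/2}$ with $\theta\in[0,2\pi)$, as the paper does (and as is forced by $\Im\mathrm{k}>0$ in Lemma \ref{lemma.kernel}); then $0\leq\Im\sqrt{\mathrm{k}^2\pm\mathrm{i}\delta}\leq(\delta/2)^{1/2}$ and your first paragraph goes through. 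Second, the quantity to be bounded is $\mathrm{k}\,\mathrm{r}(x,y,\sqrt{\mathrm{k}^2\pm\mathrm{i}\delta})$ with the outer $\mathrm{k}$ real, so the prefactor is $\mathrm{i}\mathrm{k}/(2\sqrt{\mathrm{k}^2\pm\mathrm{i}\delta})$ rather than $\mathrm{i}/2$; you need the (trivial) observation $|\mathrm{k}|\leq|\sqrt{\mathrm{k}^2\pm\mathrm{i}\delta}|$ to absorb it. With these adjustments the proof is complete.
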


\begin{proof}

In view of \cite[Proposition 3.11]{inverse}, $\det(A+\mathrm{i k} B) $ has zeros only on the imaginary axis. Denote by $\rho$ its smallest non-zero root in absolute value. We will chose latter $\delta_0$ such that $\delta_0 \ll \rho$.

By Lemma \ref{lemma.kernel}, for $\mathrm{k} \in \mathbb{R}$ and $j,j'=1,\dots,n$,
\begin{equation}
\label{rep.r}
	 [ \ \mathrm{k} \mathrm{r} (x,y, \sqrt{\mathrm{k}^2 \pm \mathrm{i} \delta}) \ ]_{j,j'} = \frac{\mathrm{i k}}{2 \sqrt{\mathrm{k}^2 \pm \mathrm{i} \delta}} \ \Big[ \delta_{j,j'} \mathrm{e}^{\mathrm{i} \sqrt{\mathrm{k}^2 \pm i \delta} |x_j- y_{j'}|} +\mathrm{e}^{\mathrm{i} \sqrt{\mathrm{k}^2 \pm i \delta} (x_j + y_{j'})} G_{j,j'} (\sqrt{\mathrm{k}^2 \pm \mathrm{i} \delta}, A, B) \Big],
\end{equation}
where $ [ \cdot ]_{j,j'}$ denotes the $j,j'$ entry of the corresponding matrix.
Recall that here the complex square root is chosen in such a way that $\sqrt{ r \mathrm{e}^{\mathrm{i} \theta}} =\sqrt{r} \mathrm{e}^{\mathrm{i} \theta/2}$ with $r>0$ and $\theta \in [0 , 2\pi)$.

We  analyze the elements of the matrix $G(\sqrt{\mathrm{k}^2 \pm \mathrm{i} \delta},A,B)$, making use of the properties of $G(\mathrm{k},A,B)=\corrected{-}(A+\mathrm{ik}B)^{-1}(A-\mathrm{ik}B)$, for $\mathrm{k} \in \mathbb{R} \setminus \{0\}$.
Using the matrix inverse formula, each element of $(A+\mathrm{i k }B)^{-1}$ is the quotient of two polynomials, except for a finite number of complex values $\mathrm{k}$, i.e. the complex zeros of $\det(A+\mathrm{ik}B)$. It follows that for any $1\leq j,j'\leq n$, there exist two  
complex polynomials $p_{jj'},q_{jj'} \in \mathbb{C}[X]$ such that, excepting a finite number o values of $\mathrm{k}$, $G_{j,j'}(\mathrm{k},A,B)$ is the quotient of the two polynomials, $p_{jj'}(\mathrm{k}),q_{jj'}(\mathrm{k})$.  Since $\det(A+\mathrm{i k} B)$ is a polynomial in $\mathrm{k}$ with zeros on the imaginary axis it follows that the following holds   in a strip around the real axis 
 \[
 G_{j,j'}(z,A,B)=\dfrac{p_{jj'}(z)}{q_{jj'}(z)}, \quad |\Im z|<\rho, {z}\neq 0.
 \]
Moreover
 $G(\mathrm{k},A,B)$ is a unitary matrix for any $\mathrm{k} \in \mathbb{R} \setminus \{0\}$ (due to \cite[Proposition 3.7]{inverse}). It follows that the above polynomials satisfy for any $\mathrm{k}\in \rr\setminus\{0\}$
$$\mathlarger{\sum_{l=1}^{n}} \dfrac{p_{jl}(\mathrm{k})}{q_{jl}(\mathrm{k})} \ \dfrac{\overline{p_{j'l}(\mathrm{k})}}{\overline{q_{j'l}(\mathrm{k})}} = \delta_{jj'}, \quad \ j,j'=1,\dots,n.$$
 Consequently, all the involved quotients satisfy
\begin{equation}
\bigg| \dfrac{p_{jj'}(\mathrm{k})}{q_{jj'}(\mathrm{k})}\bigg| \leq 1, \quad j,j'=1,\dots,n, \ k \in \mathbb{R} \setminus \{ 0 \}.\label{bound}
\end{equation}
This implies in particular that the degrees of the involved polynomials satisfy $\deg(p_{jj'}) \leq \deg(q_{jj'})$ for any $1\leq j, j'\leq n$. In order to control the behavior of $G_{j,j'}(z)$ near $z=0$ 
we decompose the above polynomials as
\[
p_{jj'}(\mathrm{k})=\tilde{p}_{jj'} (\mathrm{k})p^*_{jj'}(\mathrm{k}), \quad q_{jj'}(\mathrm{k})=\tilde{q}_{jj'} (\mathrm{k})q^*_{jj'}(\mathrm{k}), 
\]
where $\tilde{p}_{jj'}$ and $\tilde{q}_{jj'}$ denote the monomials with the largest degree such that  $p^*_{jj'}$ and $q^*_{jj'}$ are polynomials with non-zero constant term. 
Using again property \eqref{bound} for $\mathrm{k}\sim 0$ we obtain that the degrees of the new polynomials can also be compared, more precisely
$\deg(\tilde{p}_{jj'}) \leq \deg(\tilde{q}_{jj'})$.
The above properties of the involved polynomials imply that for any $1\leq j,j'\leq n$ the following holds
\begin{equation}
	\label{est.G}
	 \sup _{|\Im z|\leq \rho/2}|G_{j,j'}(z,A,B)|\leq C(A,B)<\infty.
\end{equation}

%
%
%
%
%

%
 We now express the above estimate in terms of $\mathrm{k}$ and $\delta$ by choosing $z=\sqrt{\mathrm{k}^2\pm i\delta}$.
For all  real numbers $\mathrm{k}$ and $\delta \geq0$, the following hold
\begin{equation}\label{bound_delta}
	 0\leq\Im\sqrt{\mathrm{k}^2 \pm \mathrm{i} \delta} \leq (\delta /2)^{1/2},
\end{equation}
\begin{equation*}
	\label{real.part}
	|\mathrm{k}|\leq \Re\sqrt{\mathrm{k}^2+ i\delta}\leq \sqrt{\mathrm{k}^2+\frac \delta 2},
\end{equation*}
\begin{equation*}
	\label{real.part.2}
	-\sqrt{\mathrm{k}^2+\frac \delta 2} \leq \Re\sqrt{\mathrm{k}^2- i\delta}\leq   -|\mathrm{k}|.
\end{equation*}
We choose $\delta_0=2(|\rho|/2)^2$ such that $(\delta_0 /2)^{1/2}=\rho/2$.
Let us fix $\delta\leq \delta_0$. Using \eqref{bound_delta} and \eqref{est.G} we obtain that $G_{jj'}(\sqrt{\mathrm{k}^2 \pm \mathrm{i} \delta},A,B)$ is uniformly bounded for $0\leq \delta\leq \delta_0$ and $\mathrm{k}\in \rr$.
In view of representation \eqref{rep.r} we obtain the desired estimate \eqref{bound_strip}.

%
%
%
%

\medskip

Let us fix two points on $\mathcal{G}$, $x=(x_1,\dots, x_n)$ and $y=(y_1,\dots, y_n)$, $x_i\geq 0$,  $y_i\geq 0$ for all $i=\{1,\dots, n\}$, and choose $k\in\rr\setminus\{0\}$. Notice that the matrix $G(|\mathrm{k}|,A,B)$ is well defined. 
 Since $\sqrt{\mathrm{k}^2 \pm \mathrm{i} \delta} \to  \pm|k| $ as $\delta >0$ tends to zero and the composing terms are continuous as function of $\mathrm{k}$ it is immediate that
$$ \lim_{\delta \to 0} \mathrm{k} \mathrm{r}(x,y,\sqrt{\mathrm{k}^2 \pm \mathrm{i} \delta}) = \mathrm{k} \mathrm{r}(x,y,\mathrm{|k|}), \ \forall \mathrm{k} \neq 0.$$
The proof is now finished.
\end{proof}

\medskip

We denote by $C_0^{\infty}(\mathcal{G})$ the set of all functions $f=(f_j)^T_{j=\overline{1,n}}$ such that each $f_j$ belongs to the space of compactly supported infinitely differentiable functions on the interior of the edge $\mathring{I_j}=(0,\infty)$.

\begin{prop} \label{explicitsol}

For every test function $u_0 \in C_0^{\infty}(\mathcal{G})$, the solution for the linear Schr\"{o}dinger equation \eqref{system star} can be written as
\begin{equation}
\mathrm{e}^{\mathrm{i}t \Delta(A,B)}\mathcal{P}_{ac}(-\Delta(A,B))u_0(x)= \lim_{\epsilon \searrow 0} \ \dfrac{1}{\pi \mathrm{i}} \int_{\mathcal{G}} u_0(y) \int_{\mathbb{R}} \mathrm{e}^{-(\mathrm{i}t+\epsilon) \mathrm{k}^2}   \mathrm{k} \mathrm{r}(x,y,\mathrm{k}) \ d\mathrm{k} \ dy, \label{solution}
\end{equation}

\noindent where the limit is taken in $L^2(\mathcal{G})$.\\

\end{prop}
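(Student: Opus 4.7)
The approach combines Stone's formula for the spectral projection onto $\sigma_{ac}(H)=[0,\infty)$, with $H:=-\Delta(A,B)$, the explicit resolvent kernel of Lemma~\ref{lemma.kernel} and the uniform boundary-value bound of Lemma~\ref{uniform}. In order to keep every integral absolutely convergent during the intermediate manipulations I would first introduce the regularization $\mathrm{e}^{-\epsilon H}$ with $\epsilon>0$, establish the identity at the regularized level, and pass $\epsilon\searrow0$ only at the end.

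First, Stone's formula applied to the bounded Borel function $\lambda\mapsto \mathrm{e}^{-(\mathrm{i}t+\epsilon)\lambda}\mathbf{1}_{[0,\infty)}(\lambda)$ gives
$$\mathrm{e}^{-(\mathrm{i}t+\epsilon)H}\mathcal{P}_{ac}(H)u_0=\lim_{\delta\searrow0}\frac{1}{2\pi\mathrm{i}}\int_0^\infty \mathrm{e}^{-(\mathrm{i}t+\epsilon)\lambda}\bigl[R(\lambda+\mathrm{i}\delta)-R(\lambda-\mathrm{i}\delta)\bigr]u_0\,d\lambda$$
as an identity in $L^2(\mathcal{G})$, with $R(z)=(H-z)^{-1}$. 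The change of variables $\lambda=\mathrm{k}^2$ with $\mathrm{k}>0$ and the kernel formula of Lemma~\ref{lemma.kernel} turn this into a double integral in $(\mathrm{k},y)$; Fubini applies because $u_0\in C_0^\infty(\mathcal{G})$ is compactly supported, $\mathrm{k}\,\mathrm{r}(x,y,\sqrt{\mathrm{k}^2\pm\mathrm{i}\delta})$ is uniformly bounded by Lemma~\ref{uniform}, and $\mathrm{e}^{-\epsilon\mathrm{k}^2}|\mathrm{k}|$ is integrable on $\mathbb{R}$. Here the chosen square-root convention identifies the boundary values of the resolvent at $\mathrm{k}^2\pm\mathrm{i}\delta$ with kernels evaluated at $\sqrt{\mathrm{k}^2\pm\mathrm{i}\delta}$, tending to $\pm|\mathrm{k}|$ respectively as $\delta\searrow0$.

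The heart of the argument is then pushing $\delta\searrow0$ inside the double integral by dominated convergence. The uniform estimate of Lemma~\ref{uniform} together with $|u_0(y)|\,\mathrm{e}^{-\epsilon\mathrm{k}^2}|\mathrm{k}|$ furnishes a majorant, while the pointwise limits $\mathrm{k}\,\mathrm{r}(x,y,\sqrt{\mathrm{k}^2\pm\mathrm{i}\delta})\to\mathrm{k}\,\mathrm{r}(x,y,\pm|\mathrm{k}|)$ are provided by the same lemma. After the limit the inner integral reads
$$\int_0^\infty \mathrm{e}^{-(\mathrm{i}t+\epsilon)\mathrm{k}^2}\mathrm{k}\bigl[\mathrm{r}(x,y,\mathrm{k})-\mathrm{r}(x,y,-\mathrm{k})\bigr]d\mathrm{k},$$
and the substitution $\mathrm{k}\mapsto-\mathrm{k}$ in the second summand, combined with the evenness of $\mathrm{e}^{-(\mathrm{i}t+\epsilon)\mathrm{k}^2}$, collapses it into $\int_\mathbb{R}\mathrm{e}^{-(\mathrm{i}t+\epsilon)\mathrm{k}^2}\mathrm{k}\,\mathrm{r}(x,y,\mathrm{k})\,d\mathrm{k}$.

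Finally, removing the regularization: since $\mathcal{P}_{ac}(H)u_0$ has spectral support in $[0,\infty)$, the spectral theorem (with dominant $1$) yields $\mathrm{e}^{-\epsilon H}\mathcal{P}_{ac}(H)u_0\to\mathcal{P}_{ac}(H)u_0$ in $L^2(\mathcal{G})$ as $\epsilon\searrow0$; unitarity of $\mathrm{e}^{\mathrm{i}t\Delta(A,B)}$ transports this convergence to the stated identity. The principal obstacle is the rigorous $\delta\searrow0$ passage inside the double integral---precisely what Lemma~\ref{uniform} was engineered for, since without the uniform strip estimate the resolvent difference $R(\lambda+\mathrm{i}\delta)-R(\lambda-\mathrm{i}\delta)$ has only distributional limits and the naive interchange of limits with the integral against $u_0$ would fail. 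The compact support of $u_0$ is equally essential, confining the $y$-integration to a bounded set and making the whole double integral absolutely convergent throughout the argument.
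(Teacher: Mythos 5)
Your proposal is correct and follows essentially the same route as the paper: Stone's formula for $\mathrm{e}^{-(\mathrm{i}t+\epsilon)H}\mathcal{P}_{ac}(H)$ via the resolvent, insertion of the explicit kernel from Lemma~\ref{lemma.kernel}, passage $\delta\searrow 0$ by dominated convergence using the uniform strip bound of Lemma~\ref{uniform}, the substitution $\lambda=\mathrm{k}^2$ with the two boundary values $\pm|\mathrm{k}|$ recombined into a single integral over $\mathbb{R}$, and finally $\epsilon\searrow 0$ in $L^2(\mathcal{G})$. The only cosmetic difference is that you perform the sign substitution after taking $\delta\searrow 0$ whereas the paper changes variables $\mathrm{k}=\pm\sqrt{\lambda}$ first, which is an equivalent reordering.
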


\begin{proof} In the sequel, for the ease of notation, we may refer to $-\Delta(A,B)$ also as $H$.
Let $u_0 \in C_0^{\infty}(\mathcal{G})$. By \cite[Theorem 3.1]{teschl}, it holds that
\begin{equation*}
\mathrm{e}^{-\mathrm{i}tH}u_0=\lim_{\epsilon \searrow 0} \ \mathrm{e}^{-(\mathrm{i}t + \epsilon)H }u_0 \quad in \ L^2(\mathcal{G}).
\end{equation*}
 Since the absolutely continuous spectrum of $H$ is the interval $[0, \infty)$, by the spectral theorem of representation via the resolvent for bounded functions of unbounded self-adjoint operators \cite[Theorem XII.2.11]{dunford}, we have 
\begin{equation*}
\mathrm{e}^{-(\mathrm{i}t + \epsilon) H} \mathcal{P}_{ac}(H)u_0 (x)= \lim_{\delta \to 0} \ \frac{1}{2 \pi \mathrm{i}} \int_0^{\infty} \mathrm{e}^{-(\mathrm{i}t + \epsilon)\lambda} \ \Big[ R_{\lambda +\mathrm{i} \delta} - R_{\lambda - \mathrm{i} \delta} \Big] \ u_0(x) \ d\lambda,
\end{equation*}
where $R_z=(H-z)^{-1}$.
Hence, using the resolvent representation in Lemma \ref{lemma.kernel} together with with {\it the dominated convergence theorem},
\begin{align*}
\mathrm{e}^{-(\mathrm{i}t + \epsilon) H} &\mathcal{P}_{ac}(H) u_0 (x)= \\
&= \lim_{\delta \to 0} \ \frac{1}{2 \pi \mathrm{i}} \int_0^{\infty} \mathrm{e}^{-(\mathrm{i}t + \epsilon)\lambda} \ \int_{\mathcal{G}} \big[ \mathrm{r}(x,y, \sqrt{\lambda + \mathrm{i} \delta}) \ - \ \mathrm{r}(x,y,\sqrt{\lambda - \mathrm{i} \delta})\  \big] \  u_0(y) \ dy \ d \lambda \\
&= \frac{1}{2 \pi \mathrm{i}} \int_0^{\infty} \mathrm{e}^{-(\mathrm{i}t + \epsilon)\lambda} \ \lim_{\delta \to 0} \int_{\mathcal{G}} \big[ \mathrm{r}(x,y, \sqrt{\lambda + \mathrm{i} \delta}) \ - \ \mathrm{r}(x,y,\sqrt{\lambda - \mathrm{i} \delta})\  \big] \  u_0(y) \ dy \ d \lambda.
\end{align*}
 Making now the change of variables $\mathrm{k}=\sqrt{\lambda}$ in the integral containing $\mathrm{r}(x,y,\sqrt{\lambda + \mathrm{i} \delta})$ and $\mathrm{k}=-\sqrt{\lambda}$ in the integral containing $\mathrm{r}(x,y,\sqrt{\lambda - \mathrm{i} \delta})$ leads to
\begin{multline*}
\mathrm{e}^{-(\mathrm{i}t + \epsilon) H} \mathcal{P}_{ac}(H)u_0 (x)= \frac{1}{\pi \mathrm{i}} \int_{-\infty}^0 \mathrm{e}^{-(\mathrm{i}t + \epsilon)\mathrm{k}^2} \ \lim_{\delta \searrow 0} \int_{\mathcal{G}} \ \mathrm{k}\mathrm{r}(x,y, \sqrt{\mathrm{k}^2 - \mathrm{i} \delta}) \  u_0(y) \ dy \ d\mathrm{k} \\
+ \frac{1}{\pi \mathrm{i}} \int_0^{\infty} \mathrm{e}^{-(\mathrm{i}t + \epsilon)\mathrm{k}^2} \ \lim_{\delta \searrow 0} \int_{\mathcal{G}} \ \mathrm{k}\mathrm{r}(x,y, \sqrt{\mathrm{k}^2 + \mathrm{i} \delta}) \  u_0(y) \ dy \ d\mathrm{k}.
\end{multline*}
Using again {\it the dominated convergence} and {\it Fubini's} theorems, together with Lemma \ref{uniform} we finally obtain
\begin{align*}
&\mathrm{e}^{-(\mathrm{i}t+ \epsilon) H}\mathcal{P}_{ac}(H) u_0(x) =\\
&=\frac{1}{\pi \mathrm{i}} \int_{-\infty}^0 \mathrm{e}^{-(\mathrm{i}t + \epsilon)\mathrm{k}^2} \int_{\mathcal{G}} \ \mathrm{k}\mathrm{r}(x,y, -|\mathrm{k}|)   u_0(y)  dy  d\mathrm{k} + \frac{1}{\pi \mathrm{i}} \int_0^{\infty} \mathrm{e}^{-(\mathrm{i}t + \epsilon)\mathrm{k}^2}\int_{\mathcal{G}} \ \mathrm{k}\mathrm{r}(x,y, |\mathrm{k}| )  u_0(y)  dy d\mathrm{k}\\
&=\frac{1}{\pi \mathrm{i}} \int_{\mathcal{G}} u_0(y) \int_{\mathbb{R}} \mathrm{e}^{-(\mathrm{i}t+\epsilon) k^2}   \mathrm{k} \mathrm{r}(x,y,\mathrm{k}) \ d\mathrm{k} \ dy .
\end{align*}

\noindent Passing now to the limit $\epsilon \to 0$ in $L^2(\mathcal{G})$, we obtain the desired representation

\begin{equation*}
\mathrm{e}^{-\mathrm{i}t H} \mathcal{P}_{ac}(H) u_0(x)= \lim_{\epsilon \searrow 0} \ \dfrac{1}{\pi \mathrm{i}} \int_{\mathcal{G}} u_0(y) \int_{\mathbb{R}} \mathrm{e}^{-(\mathrm{i}t+\epsilon) \mathrm{k}^2}   \mathrm{k} \mathrm{r}(x,y,\mathrm{k}) \ d\mathrm{k} \ dy,
\end{equation*}

\noindent which completes the proof.
\end{proof}


\subsection{Dispersive and Strichartz estimates}

Having the explicit form of the solution expressed in Proposition \ref{explicitsol}, we obtain first the $L^{\infty}-$time decay of the solution, using a classical result for oscillatory integrals and, as a consequence, $L^p \to L^{p'}$ dispersive properties. Together with the properties of the unitary group and the projection operator, the Strichartz estimates follow using a more general result of Keel and Tao.\\

\begin{proof}[Proof of Theorem \ref{theorem 1}] We first establish the $L^p-L^{p'}$ decay and then the Strichartz estimates.

\noindent {\bf Step I. ${L^p \to L^{p'}}$  estimates.} Let us first establish the $L^1-L^{\infty}$  estimate. 
 By density, it is sufficient to consider $u_0 \in C_0^{\infty}(\mathcal{G})$. In view of \corrected{Proposition \ref{explicitsol}}, each component $j=1,\dots,n$ of the solution \eqref{solution} can be rewritten as
\begin{equation}
\Big( \mathrm{e}^{-\mathrm{i}tH} \mathcal{P}_{ac}(H)u_0(x) \Big) _j= \lim_{\epsilon \searrow 0} \ \dfrac{1}{\pi \mathrm{i}} \sum_{j'=1}^{n} \int_{I_{j'}} \int_{\mathbb{R}} \mathrm{e}^{-(\mathrm{i} t+\epsilon) \mathrm{k}^2} (\mathrm{k} \mathrm{r}(x,y,\mathrm{k}))_{j,j'} \ d\mathrm{k} \ u_{0j'} (y) dy, \label{component}
\end{equation}

\noindent with
\begin{equation*}
\mathrm{k} \mathrm{r}(x,y,\mathrm{k})= \dfrac{\mathrm{i}}{2} \mathrm{diag}(\mathrm{e}^{\mathrm{i} \mathrm{k} | x_l-y_l| })_{l=\overline{1,n}} + \dfrac{\mathrm{i}}{2} \ \mathrm{diag}(\mathrm{e}^{\mathrm{i} \mathrm{k} x_l})_{l=\overline{1,n}} \ G (\mathrm{k},A,B) \ \mathrm{diag}(\mathrm{e}^{\mathrm{i} \mathrm{k} y_l})_{l=\overline{1,n}}.
\end{equation*}
Estimate \eqref{bound} and  the properties on the degrees of the polynomials we obtained in the proof of Lemma \ref{uniform} show that there exists a positive constant $C(A,B)$ such that 
 for all $ i,j=1,\dots,n$:
\begin{equation*}\label{gnorm}
\sup_{\mathrm{k}\in \rr} |G_{i,j}(\mathrm{k},A,B)|\leq 1, \quad \int _{\rr} |G'_{i,j}(\mathrm{k},A,B)| \ d\mathrm{k} \leq C(A,B) < \infty.
\end{equation*}

\noindent Since each component \eqref{component}  is the limit of a sum of $n$ integrals with respect to $y \in \mathcal{G}$, with oscillatory integrals with respect to $\mathrm{k} \in \mathbb{R}$ as integrands, applying Van Der Corput Lemma (e.g. \cite[p.~334]{stein}), for each $j=1,\dots,n$ follows that

\begin{align*}
	\Big\| \Big( \mathrm{e}^{-\mathrm{i}tH} \mathcal{P}_{ac}(H)u_0(x) \Big) _j \Big\|_{L^{\infty}(I_j)} &\leq \dfrac{1}{\sqrt{t}} \sum_{j'=1}^{n} \Big( ||\mathrm{e}^{-\epsilon \mathrm{k}^2} G_{j,j'} ||_{L^{\infty}(\mathbb{R})} + ||(\mathrm{e}^{-\epsilon \mathrm{k}^2}G_{j,j'})' ||_{L^{1}(\mathbb{R})} \Big) \  ||u_{0j'} ||_{L^1(I_{j'})}\\
	&\leq \dfrac{1}{\sqrt{t}} \sum_{j'=1}^{n} \Big( ||G_{j,j'} ||_{L^{\infty}(\mathbb{R})} + ||G'_{j,j'} ||_{L^{1}(\mathbb{R})} \Big) \  ||u_{0j'} ||_{L^1(I_{j'})}\\
	&\leq C(A,B) \dfrac{1}{\sqrt{t}} \sum_{j'=1}^{n}  \  ||u_{0j'} ||_{L^1(I_{j'})} = C(A,B) \dfrac{1}{\sqrt{t}} \|u_0 \|_{L^1(\mathcal{G})}.
\end{align*}
Since $(\mathrm{e}^{-\mathrm{i}tH})_{t \in \mathbb{R}}$ is a family of unitary operators preserving the $L^2-$norm we obtain 
\begin{equation*}
\big\| \mathrm{e}^{-\mathrm{i}tH}\mathcal{P}_{ac}(H) u_0 \big\|_{L^2(\mathcal{G})} \leq || u_0 ||_{L^2(\mathcal{G})}.
\end{equation*}
By \textit{Riesz-Thorin's interpolation theorem}, one has that for all $p \in[1,2]$ and $t \neq 0$,
\begin{equation*}
\big\| \mathrm{e}^{-\mathrm{i}tH}\mathcal{P}_{ac}(H) u_0 \big\|_{L^{p'}(\mathcal{G})} \leq C(A,B,p) |t|^{-\frac{1}{p}+\frac{1}{2}} \ ||u_0 ||_{L^p(\mathcal{G})}.
\end{equation*}
{\bf Step II. Strichartz estimates.} We employ here a result of Keel and Tao in \cite{keel}, concerning Strichartz estimates in the following abstract setting: let $(X,dx)$ be a measure space and $\mathcal{H}$ a Hilbert space. Suppose that for each time $t \in \mathbb{R}$, we have an operator $U(t):\mathcal{H} \to L^2(X)$ which obeys estimates:

\begin{enumerate}[label=(\roman*)]
\setlength\itemsep{8pt}

\item For all $t$ and $f \in \mathcal{H}$,
\begin{equation*}\label{first}
 ||U(t)f ||_{L^2(X)} \lesssim ||f ||_{\mathcal{H}}; 
\end{equation*}

\item For some $\sigma > 0$, for all $t \neq s$ and all $g \in L^1(X)$
\begin{equation*}\label{second}
|| U(t)U^*(s) g ||_{L^{\infty} (X)} \lesssim |t-s|^{-\sigma} \ || g||_{L^1(X)}.
\end{equation*}

\end{enumerate}

\noindent We recall that an exponent pair $(q, r)$ is called sharp $\sigma-$admissible if $q, r \geq 2$,
$(q, r, \sigma) \neq (2, \infty, 1)$ and
\begin{equation}
\dfrac{1}{q}+\dfrac{\sigma}{r} = \dfrac{\sigma}{2}. \label{sharp}
\end{equation}

\begin{thm}\cite{keel} \label{keel} If $U(t)$ obeys \ref{first} and \ref{second}, then the estimates 

\begin{enumerate}[label=(\Roman*)]
\setlength\itemsep{8pt}

\item $\qquad \qquad \qquad \qquad \qquad \qquad \big\| U(t)f \big\|_{{L^{q}_t} {L^{r}_x}} \lesssim || f ||_{\mathcal{H}}$ \label{e1}

\item $\qquad \qquad \qquad \qquad \qquad \bigg\| \mathlarger{\int_{\mathbb{R}}} U(t) U^*(s) F(s,\cdot) ds \bigg\|_{\mathcal{H}} \lesssim || F ||_{{L^{q'}_t} {L^{r'}_x}}$ \label{e2}

\item $\qquad \qquad \qquad \qquad \quad \bigg\| \mathlarger{\int_{s<t}} U(t) U^*(s) F(s,\cdot) ds\bigg\|_{L^q_t L^r_x} \lesssim || F ||_{{L^{\tilde{q}'}_t} {L^{\tilde{r}'}_x}}$ \label{e3}

\end{enumerate}
hold for all sharp $\sigma$-admissible exponent pairs $(q, r)$, $(\tilde{q}, \tilde{r}).$ 
\end{thm}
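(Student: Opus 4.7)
The plan is to deduce the three estimates from the two hypotheses by a $TT^*$ argument combined with real interpolation in the temporal variable. First, I would observe that (\ref{e1}) and (\ref{e2}) are equivalent by duality: viewing $T: \mathcal{H} \to L^q_t L^r_x$ defined by $Tf = U(t)f$, its adjoint is $T^*F = \int U^*(s) F(s)\, ds$, so the identity $\|T\|^2 = \|T^*T\| = \|TT^*\|$ reduces both estimates to the bilinear form inequality
$$\Big| \int \int \langle U(t)U^*(s) F(s), G(t) \rangle_{L^2_x}\, ds\, dt \Big| \lesssim \|F\|_{L^{q'}_t L^{r'}_x} \|G\|_{L^{q'}_t L^{r'}_x}.$$

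Next I would combine the two hypotheses. From \ref{first} one gets $\|U(t)U^*(s)\|_{L^2 \to L^2} \lesssim 1$, while \ref{second} gives $\|U(t)U^*(s)\|_{L^1 \to L^\infty} \lesssim |t-s|^{-\sigma}$. Riesz--Thorin interpolation in the spatial variable then yields $\|U(t)U^*(s)\|_{L^{r'} \to L^r} \lesssim |t-s|^{-\sigma(1-2/r)}$ for every $r\in [2,\infty]$. Inserting this into the bilinear form and using H\"older in $x$, the non-endpoint case $q > 2$ reduces to controlling
$$\Big| \int \int |t-s|^{-\sigma(1-2/r)} \|F(s)\|_{L^{r'}_x} \|G(t)\|_{L^{r'}_x}\, ds\, dt \Big|,$$
which follows at once from the Hardy--Littlewood--Sobolev inequality, because the admissibility relation (\ref{sharp}) translates precisely to the scaling identity $\sigma(1 - 2/r) = 1 - 2/q \in (0,1)$ together with $2/q' = 1 + \sigma(1-2/r)$. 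This settles (\ref{e1}) and (\ref{e2}) off the endpoint.

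The main obstacle in the general Keel--Tao theorem is the endpoint $q=2$, which only arises when $\sigma > 1$, so is irrelevant for the one-dimensional Schr\"odinger setting of the present paper where $\sigma = 1/2$. At the endpoint, HLS fails on the diagonal and one must decompose the bilinear form dyadically over shells $|t-s| \sim 2^k$, estimate each piece using two off-diagonal Lorentz-space bounds straddling the endpoint, and recombine by a bilinear real interpolation argument. I would simply import Keel and Tao's atomic decomposition verbatim at this step.

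Finally, for the inhomogeneous estimate (\ref{e3}), I would first obtain the full-range (untruncated) bound by composing (\ref{e1}) for $(q,r)$ with (\ref{e2}) for $(\tilde q, \tilde r)$; the truncation to $s < t$ is then installed via the Christ--Kiselev lemma, whose hypothesis $\tilde q' < q$ is guaranteed by sharp admissibility of both pairs away from the endpoint. With only minor modifications this also recovers the endpoint inhomogeneous bound, completing the proof.
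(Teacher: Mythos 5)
The paper offers no proof of this statement at all: Theorem \ref{keel} is quoted verbatim from Keel and Tao \cite{keel} and used as a black box in Step II of the proof of Theorem \ref{theorem 1}, so there is no internal argument to compare yours against. Taken on its own terms, your sketch is the standard route: the $TT^*$ reduction of (\ref{e1})--(\ref{e2}) to a bilinear form, Riesz--Thorin between the $L^2\to L^2$ and $L^1\to L^\infty$ bounds on $U(t)U^*(s)$ to get the $L^{r'}\to L^r$ decay $|t-s|^{-\sigma(1-2/r)}$, Hardy--Littlewood--Sobolev in the time variable off the endpoint, Keel--Tao's dyadic/atomic machinery imported for $q=2$, and composition plus Christ--Kiselev for the truncation in (\ref{e3}). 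This is correct in substance, with one arithmetic slip: the HLS condition $\tfrac{2}{q'}+\sigma(1-\tfrac2r)=2$ forces $\sigma(1-\tfrac2r)=\tfrac2q$, not $1-\tfrac2q$, and correspondingly $\tfrac{2}{q'}=2-\sigma(1-\tfrac2r)$; the identities you wrote hold only when $q=4$. The conclusion is unaffected, since the constraint is still exactly sharp $\sigma$-admissibility with $q>2$. Two further cautions: Christ--Kiselev genuinely fails at the double endpoint $\tilde q'=q=2$, so the retarded endpoint bound is not a ``minor modification'' but requires Keel--Tao's full argument; and your observation that the endpoint never occurs for $\sigma=\tfrac12$ (every sharp $\tfrac12$-admissible pair has $q\ge 4$) is correct and is precisely why the authors can afford to cite the theorem without further comment.
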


\noindent From the properties of the unitary group $\mathrm{e}^{-\mathrm{i}tH}$, the projection operator $\mathcal{P}_{ac}(H)$ (mass conservation, self-adjointness and the \corrected{commutative} property in \cite[Corollary XII.2.7]{dunford} and the dispersive estimate \ref{estimate 1} in Theorem \ref{theorem 1}, conditions \ref{first} and \ref{second}, are satisfied for $X=\mathcal{G}$, $\mathcal{H}=L^2(\mathcal{G})$, $U(t)=\mathrm{e}^{-\mathrm{i}tH}\mathcal{P}_{ac}(H)$ and $\sigma=\dfrac{1}{2}$, for $t \in \mathbb{R}$. Thus, by Theorem \ref{keel}, the Strichartz estimates in Theorem \ref{theorem 1} hold.

Hence, the proof of Theorem \ref{theorem 1} is now complete.
\end{proof}

\begin{proof}[Proof of Corollary A\ref{corollaryA1}]
We know by \cite[Theorem 3.7]{laplacians} that in the case of a star-shaped metric graph the Hamiltonian $H=-\Delta(A,B)$ has at most a finite number of negative eigenvalues. More precisely, it is equal with $n_{+}(AB^{\dag})$, i.e. the number of positive eigenvalues of the matrix $A B^{\dag}$. For every $j \in \{ 1,\dots, n_+(AB^{\dag}) \}$  the eigenfunction $\varphi_j$ corresponding to the eigenvalue $\lambda_j=-k_j^2$, $k_j>0$, is of type $ c_je^{-k_jx}$ on each edge parametrized by $x\in (0,\infty)$. Hence, 
\begin{equation*}
\varphi_j \in L^r(\mathcal{G}), \qquad \forall \ 1 \leq r \leq \infty.
\end{equation*}
For every $f \in L^2(\mathcal{G})$ the projection $\mathcal{P}_p(H)$ on the closed subspace spanned by the corresponding eigenfunctions $\{\varphi_j\}_{j=1}^{n_+(AB^{\dag})}$ is given by
$$ \mathcal{P}_p(H) f = \sum_{j=1}^{n_+(AB^{\dag})} <f,\varphi_j>  \varphi_j  .$$ 
%
%
and
\begin{equation*}
\mathrm{e}^{-\mathrm{i}tH} \mathcal{P}_p(H)f= \sum_{j=1}^{n_+(AB^{\dag})} \mathrm{e}^{-\mathrm{i}\lambda_j t} <f,\varphi_j> \varphi_j.
\end{equation*}
Using that   $\lambda_j\in \rr$  and {\it H\"{o}lder's inequality}, we  obtain that for every $\alpha \geq 1$,
\begin{align*}
 \| \mathrm{e}^{-\mathrm{i}tH} \mathcal{P}_p(H) f \|_{L^{r}(\mathcal{G})}& \leq \sum_{j=1}^{n_+(AB^{\dag})} |<f,\varphi_j>| \| \varphi_j \|_{L^{r}(\mathcal{G})}\\
&\leq \| f \|_{L^{\alpha}(\mathcal{G})} \sum_{j=1}^{n_+(AB^{\dag})} \| \varphi_j\|_{L^{\alpha'}(\mathcal{G})} \| \varphi_j \|_{L^{r}(\mathcal{G})}\\
&\leq C(\mathcal{G},A,B,\alpha,r) \| f \|_{L^{\alpha}(\mathcal{G})}.
\end{align*}
 Taking now the $L^q$--norm on the time interval $(-T,T)$, we get that for every $\alpha \geq 1$,
\begin{equation}\label{strichartzpp1}
\| \mathrm{e}^{-\mathrm{i}tH} \mathcal{P}_p(H) f\|_{L^q((-T,T), L^r(\mathcal{G}))} \leq C(\mathcal{G},A,B,\alpha,r) (2T)^{1/q} \| f \|_{L^{\alpha}(\mathcal{G})}.
\end{equation}
%
Hence, we consequently obtain that

\zeromathleft
\begin{align}
\label{strichartzpp2} \bigg\| \int_0^t \mathrm{e}^{-\mathrm{i}(t-s)H} \mathcal{P}_p(H) F(s) \ ds \bigg\|_{L^q((-T,T), L^r(\mathcal{G}))} &\leq \bigg\| \int_0^t \| \mathrm{e}^{-\mathrm{i}(t-s)H} \mathcal{P}_p(H) F(s) \|_{L^r(\mathcal{G})} \ ds \bigg\|_{L^q(-T,T)}  \nonumber \\
&\leq        C(\mathcal{G},A,B,\alpha,r) (2T)^{1/q} \int_{-T}^T \| F(s) \|_{L^{\alpha}(\mathcal{G})} \ ds. 
\end{align}
Taking now into account that
\mathcenter
\begin{equation*}
\mathrm{e}^{-\mathrm{i}tH} u_0= \mathrm{e}^{-\mathrm{i}tH} \mathcal{P}_{ac}(H) u_0 + \mathrm{e}^{-\mathrm{i}tH} \mathcal{P}_p(H)u_0,
\end{equation*} 
the desired result follows after corroborating the estimates \eqref{strichartzpp1} and \eqref{strichartzpp2} with Theorem \ref{theorem 1}.
\end{proof}

\subsection{Well-posedness \corrected{in ${L^2(\mathcal{G})}$} of the nonlinear Schr\"{o}dinger equation} In this subsection, making use of the Strichartz estimates, we show that the problem \eqref{system nonlinear} is globally well-posed in the mild form. First we establish local in time existence and uniqueness, then, once the conservation of the $L^2(\mathcal{G})-$norm is obtained, we prove that the solution is global.

\begin{proof}[Proof of Theorem \ref{theorem 2}] We divide the proof in few steps.

 {\bf Step I. Local well-posedness.} We consider the integral equation:

\begin{equation}\label{integral eq}
u(t)=\underbrace{\mathrm{e}^{-\mathrm{i}t H} u_0 + \mathrm{i} \, \corrected{\lambda} \int_0^t \mathrm{e}^{-\mathrm{i}(t-s) H} ( | u |^{p-1} u)(s)ds }_{\mathlarger{\corrected{\Phi(u)(t)}}}
\end{equation}

%
%
%
Let us set the admissible pair $(q_0,r_0)=(4(p+1)/(p-1),p+1)$. The proof 
follows the same steps as the ones in \cite[Theorem 5.2]{linares}, more precisely, using the Strichartz estimates in Corollary A\ref{corollaryA1}, one shows that $\Phi$ is a strict-contraction on some balls of the space $C([-T,T]: L^2(\mathcal{G}))\cap L^{q_0}([-T,T], L^{r_0}(\mathcal{G}))$, for 
 some convenient $T<1$  depending on $\| u_0 \|_{L^2(\mathcal{G})} $, \corrected{$\lambda$} and $ p$. In particular, we use that the nonlinearity $g(u)=\corrected{\lambda}|u|^{p-1}u$ satisfies the following estimate when $p\in (1,5)$ (see  \cite[Theorem~5.2]{linares})
 \corrected{
\begin{equation}\label{ineg.neliniara}
\begin{aligned}
\| g(v) &- g(w) \|_{L^{q_0'}((-T,T),L^{r_0'}(\mathcal{G}))} \\
&\leq c |\lambda| T^{(5-p)/4} \, \| v-w\|_{L^{q_0}((-T,T),L^{r_0}(\mathcal{G}))} \Big(\| v\|^{p-1}_{L^{q_0}((-T,T),L^{r_0}(\mathcal{G}))} + \| w \|^{p-1}_{L^{q_0}((-T,T),L^{r_0}(\mathcal{G}))}\Big).
\end{aligned}
\end{equation}}



{\bf Step II. Extra-integrability.} We now prove that 
the solution $u$ obtained above belongs to $ L^q([-T,T], L^r(\mathcal{G}))$ for all sharp ${1}/{2}-$admissible exponent pairs $(q,r)$.
Due to the Strichartz estimates in Corollary A\ref{corollaryA1}, we have that
\begin{align*}
 \| u  &\|_{L^q([-T,T],L^r(\mathcal{G}))} \lesssim \| \mathrm{e}^{-\mathrm{i} t H} \|_{L^q([-T,T],L^r(\mathcal{G}))} + \corrected{|\lambda|} \Big\|   \int_0^t \mathrm{e}^{-\mathrm{i}(t-s)H} ( | u |^{p-1} u)(s)ds   \Big\|_{L^q([-T,T],L^r(\mathcal{G}))} \\
&\lesssim [1+(2T)^{1/q}] \| u_0 \|_{L^2(\mathcal{G})}+ \|  g(u) \|_{L^{ q_0'}([-T,T], L^{r_0'}(\mathcal{G}))} + (2T)^{1/q_0} || g(u) ||_{L^{1}((-T,T)), L^{r_0'}(\mathcal{G}))}\\
&\lesssim [1+(2T)^{1/q}] \| u_0 \|_{L^2(\mathcal{G})}+ [1+(2T)^{1/q+1/q_0}]   \|  g(u) \|_{L^{ q_0'}([-T,T], L^{r_0'}(\mathcal{G}))}\\
&\lesssim [1+(2T)^{1/q}] \| u_0 \|_{L^2(\mathcal{G})}+ \corrected{|\lambda|}C(T)\|u\|_{L^{q_0}([-T,T], L^{r_0}(\mathcal{G}))}.
\end{align*}
\corrected{This shows that once the local well-posedness gives that the solution $u$ is in $L^{q_0}_{loc}(\rr, L^{r_0}(\mathcal{G}))$ then it belongs to the whole class of spaces $L^{q}_{loc}(\rr, L^{r}(\mathcal{G}))$, with $(q,r)$ being any $1/2$ sharp admissible pairs.
Moreover, when $p\leq 5$ for any admissible pair $(q,r)$ we can find another admissible pair $(q^*,r^*)$ such that $pr'=r^*$, $pq'\leq q^*$ and then
\begin{equation}
\label{ineg.neliniara.2}
	\|g(u)\|_{L^{q'}([-T,T], L^{r'}(\mathcal{G}))}=\| u\|^p_{L^{pq'}([-T,T], L^{pr'}(\mathcal{G}))} \leq C(T)\|u\|^p_{L^{q^*}([-T,T], L^{r^*}(\mathcal{G})}.
\end{equation}}

%
%
%
%

{\bf Step III.  Conservation of the $L^2-$norm and the global well-posedness.} 
In order to prove the conservation of the $L^2$-norm we will regularize the solution obtained above and  analyze the $L^2$-norm of the regularized solution. Once the conservation of the $L^2-$norm is obtained, we can repeat the local argument in Step I as many times as we wish, preserving the length of time interval to get a global solution. The following lemma will play an important role in our approach. Its proof will be given latter.

\begin{lem} \label{regularization}
The operator $(I+\varepsilon^2 H)^{-1}$ satisfies the following:
\begin{enumerate}[label=(\roman*)] \label{regularoperator}

\item\label{est.k.1} There exist two  constants $\varepsilon_0=\varepsilon_0(A,B)$ and $C(A,B)$ such that   for any $1\leq p \leq \infty$:
\[
\big\|(I+\varepsilon^2 H)^{-1} \big\|_{\mathcal{L}(L^p(\mathcal{G}),L^p(\mathcal{G}))} \leq C(A,B), \quad  \forall \, |\varepsilon|<\varepsilon_0,
\]

\item For any $1\leq p<\infty$ and $\varphi \in L^p(\mathcal{G})$,
$$ \lim_{\varepsilon \to 0} \ (I+\varepsilon^2 H)^{-1} \varphi = \varphi \qquad in \ L^p(\mathcal{G}); $$

\item For any $(q,r)$ such that $1\leq q,r<\infty$ and  $\psi \in L^q((-T,T), L^r(\mathcal{G}))$,
$$ \lim_{\varepsilon \to 0} \ (I+\varepsilon^2 H)^{-1} \psi = \psi \qquad in \ L^q((-T,T), L^r(\mathcal{G})).$$
\end{enumerate}
\end{lem}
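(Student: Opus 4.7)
The plan is to obtain an explicit integral representation of $(I+\varepsilon^2 H)^{-1}$ by invoking Lemma~\ref{lemma.kernel} with spectral parameter $\mathrm{k}=\mathrm{i}/\varepsilon$. Since $H=-\Delta(A,B)$ has only finitely many negative eigenvalues, $-\mathrm{k}^2=1/\varepsilon^2$ lies in the resolvent set for all $\varepsilon$ smaller than some $\varepsilon_0=\varepsilon_0(A,B)$, and
\[
(I+\varepsilon^2 H)^{-1}=\varepsilon^{-2}(H-\mathrm{k}^2)^{-1}
\]
has matrix-valued integral kernel
\[
\frac{1}{2\varepsilon}\,\delta_{jj'}\,\mathrm{e}^{-|x-y|/\varepsilon}+\frac{1}{2\varepsilon}\,\mathrm{e}^{-x/\varepsilon}\,G_{jj'}(\mathrm{i}/\varepsilon,A,B)\,\mathrm{e}^{-y/\varepsilon}.
\]
The first ``free'' piece is convolution against the $L^1$-normalized kernel $\tfrac{1}{2\varepsilon}\mathrm{e}^{-|\cdot|/\varepsilon}$, i.e.\ a standard approximate identity; the second ``boundary'' piece is a sum of rank-one operators $\mathrm{e}^{-x/\varepsilon}\otimes\mathrm{e}^{-y/\varepsilon}$ weighted by $G_{jj'}(\mathrm{i}/\varepsilon,A,B)$.

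For (i), the free part is bounded $L^p\to L^p$ uniformly in $\varepsilon$ by Young's inequality, with constant $1$. For the boundary part, the key is to show that the coefficients $G_{jj'}(\mathrm{i}/\varepsilon,A,B)$ are bounded uniformly for $\varepsilon\in(0,\varepsilon_0)$. I would appeal to the rational-function analysis already carried out in the proof of Lemma~\ref{uniform}: each $G_{jj'}$ is a quotient $p_{jj'}/q_{jj'}$ with $\deg p_{jj'}\leq \deg q_{jj'}$ whose only poles lie on the imaginary axis, so $G_{jj'}(\mathrm{k})$ is bounded at $|\mathrm{k}|=\infty$ and, after possibly shrinking $\varepsilon_0$ so that $\mathrm{i}/\varepsilon$ sits past all such poles, uniformly bounded at $\mathrm{k}=\mathrm{i}/\varepsilon$. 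A direct computation shows that $\|\tfrac{1}{2\varepsilon}\mathrm{e}^{-x/\varepsilon}\|_{L^p([0,\infty))}\|\mathrm{e}^{-y/\varepsilon}\|_{L^{p'}([0,\infty))}$ is independent of $\varepsilon$ (the $\varepsilon^{1/p-1}$ and $\varepsilon^{1/p'}$ cancel), so Hölder closes the $L^p$ bound for the boundary piece.

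For (ii), I would use the density of $C_0^\infty(\mathcal{G})$ in $L^p(\mathcal{G})$ for $1\leq p<\infty$ combined with the uniform bound from (i). For $\varphi\in C_0^\infty(\mathcal{G})$ the free part converges to $\varphi_j$ in $L^p$ by the usual approximate-identity result applied to the zero extension of $\varphi_j$ on $\mathbb{R}$; and since each $\varphi_{j'}$ is supported in $[a,\infty)$ for some $a>0$, the ``moment'' $\int_0^\infty \mathrm{e}^{-y/\varepsilon}\varphi_{j'}(y)\,dy$ is $O(\mathrm{e}^{-a/\varepsilon})$, which dominates the polynomial growth $\varepsilon^{1/p-1}$ of $\|\tfrac{1}{2\varepsilon}\mathrm{e}^{-x/\varepsilon}\|_{L^p}$, so the boundary piece tends to $0$ in $L^p$. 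Finally, (iii) follows from (ii) applied slicewise in $t$: for $\psi\in L^q((-T,T),L^r(\mathcal{G}))$ with $1\leq q,r<\infty$, one has $\|(I+\varepsilon^2 H)^{-1}\psi(t)-\psi(t)\|_{L^r(\mathcal{G})}\to 0$ for a.e.\ $t$, dominated by $(C+1)\|\psi(t)\|_{L^r(\mathcal{G})}\in L^q(-T,T)$ by (i), and dominated convergence concludes.

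The main obstacle I anticipate is the uniform control of $G(\mathrm{i}/\varepsilon,A,B)$ when $B$ is singular (as in the Kirchhoff case), which precludes the naive factorization $G(\mathrm{i}/\varepsilon)=(B-\varepsilon A)^{-1}(B+\varepsilon A)$. The cleanest workaround is the polynomial degree comparison already extracted in Lemma~\ref{uniform} from the unitarity of $G$ on the real axis, but one should note that that lemma only bounded $G$ on a strip $|\Im z|\leq \rho/2$ close to $\mathbb{R}$, whereas here $\mathrm{k}=\mathrm{i}/\varepsilon$ sits far from it; the rescue is that $G$ is rational, bounded at $\infty$, and has only finitely many poles on $\mathrm{i}\mathbb{R}$, so the bound transfers to any region $|\mathrm{k}|>R$ for sufficiently large $R$.
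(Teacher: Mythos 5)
Your proposal is correct and follows essentially the same route as the paper: the same explicit kernel $\tfrac{1}{2\varepsilon}\mathrm{e}^{-|x-y|/\varepsilon}\delta_{jj'}+\tfrac{1}{2\varepsilon}\mathrm{e}^{-x/\varepsilon}G_{jj'}(\mathrm{i}/\varepsilon,A,B)\mathrm{e}^{-y/\varepsilon}$ obtained from Lemma~\ref{lemma.kernel} at $\mathrm{k}=\mathrm{i}/\varepsilon$, the same uniform bound on $G(\mathrm{i}/\varepsilon,A,B)$ via its rational structure with poles only on the imaginary axis (the paper additionally cites \cite{inverse} for this), Young/H\"older for (i), an approximate-identity argument for (ii), and slicewise dominated convergence for (iii). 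The only minor variation is in (ii): the paper shows the boundary term vanishes for arbitrary $L^p$ data by a reflection trick (writing it as $K_\varepsilon\ast\tilde\phi$ with $\tilde\phi$ supported on the negative half-line), whereas you reduce to compactly supported data by density and use the exponential smallness of $\int_0^\infty \mathrm{e}^{-y/\varepsilon}\varphi(y)\,dy$ --- both are valid, and your observation that the strip bound of Lemma~\ref{uniform} does not directly cover $\mathrm{k}=\mathrm{i}/\varepsilon$ is a fair point that your rational-function argument correctly repairs.
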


For any initial data $u_0 \in L^2(\mathcal{G})$ we consider the local solution obtained at Step I, 
 $u \in C([-T,T],L^2(\mathcal{G})) \underset{(q,r)-adm}{\medcap} L^q((-T,T),L^r(\mathcal{G}))$ \corrected{satisfying \eqref{integral eq}}.
 Let us consider $u_\varepsilon$ the regularization of $u$ defined as
\begin{equation*}
u_{\varepsilon}(t):= (I+\varepsilon^2 H)^{-1} u(t).
\end{equation*}
Since the operator $(I+\varepsilon^2 H)^{-1}$ commutes with the unitary group $\mathrm{e}^{-\mathrm{i}tH}$, we obtain that
\begin{equation*}
\begin{cases}
u_{\varepsilon} \in C([-T,T],\mathcal{D}(H)), \\
u_{\varepsilon}(t)=\mathrm{e}^{-\mathrm{i}tH}u_{0\varepsilon} + \mathrm{i}  \mathlarger{\int_0^t} \mathrm{e}^{-\mathrm{i}(t-s)H}(I+\varepsilon^2 H)^{-1}(|u|^{p-1}u)(s) \ ds,
\end{cases}
\end{equation*}
 where $u_{0\varepsilon}=(I+\varepsilon^2 H)^{-1} u_0$. In view of \eqref{ineg.neliniara.2} we have that $g(u) \in L^1((-T,T),L^2(\mathcal{G}))$ and hence 
$(I+\varepsilon^2 H)^{-1} g(u) \in L^1((-T,T),\mathcal{D}(H))$.
 %
%
%
%
%
%
%
%
Applying now \cite[Proposition 4.1.9]{cazenaveharraux}, we obtain that
$
u_{\varepsilon} \in C([-T,T],\mathcal{D}(H))\cap W^{1,1}((-T,T),L^2(\mathcal{G})),
$
and, moreover,
\begin{equation*}
\begin{cases}
\mathrm{i} \dfrac{\partial u_{\varepsilon}}{\partial t} (t)=H u_{\varepsilon} (t) + (I+\varepsilon^2 H)^{-1} g(u), \qquad for \, a.e. \ t \in [-T,T], \\
u_{\varepsilon}(0)=u_{0 \varepsilon}.
\end{cases}
\end{equation*}
This shows that the map $[-T,T] \ni t \longmapsto \| u_{\varepsilon}(t) \|^2_{L^2(\mathcal{G})} $ is absolutely continuous and
\begin{equation*}
\| u_{\varepsilon}(t) \|^2_{L^2(\mathcal{G})} - \| u_{0 \varepsilon} \|^2_{L^2(\mathcal{G})} = 2 \Re \ \mathrm{i} \int_0^t \int_{\mathcal{G}} (I+\varepsilon^2 H)^{-1} (g(u))(s,x) \ \overline{u}_{\varepsilon}(s,x) \ dx \ ds.
\end{equation*}
In view of Lemma \ref{regularization}, as $\varepsilon$ goes to zero,
\begin{equation*}
u_{\varepsilon}(t) \rightarrow u(t) \qquad in \ L^2(\mathcal{G}), \ \forall \ t \in  (-T,T),
\end{equation*}
 and for every $(q,r)$ sharp $1/2$--admissible the following holds
\[
u_{\varepsilon} \rightarrow u \qquad in \ L^q((-T,T),L^r(\mathcal{G})).
\]
Furthermore, taking into account \eqref{ineg.neliniara.2}, we also have that $g(u)\in L^{q'}((-T,T),L^{r'}(\mathcal{G}))$ and as $\varepsilon $ goes to zero
\begin{equation*}
(I+\varepsilon^2 H)^{-1} g(u) \rightarrow g(u) \qquad in \ L^{q'}((-T,T),L^{r'}(\mathcal{G}))
\end{equation*}
This leads to 
\begin{equation*}
\int_0^t \int_{\mathcal{G}} (I+\varepsilon^2 H)^{-1} (g(u))(s,x) \overline{u}_{\varepsilon} (s,x) \ dx \ ds \rightarrow \int_0^t \int_{\mathcal{G}} g(u(s,x)) \overline{u} (s,x) \ dx \ ds,  
\end{equation*}
as $\varepsilon$ tends to zero. Finally, since $\Re (\mathrm{i} g(u), u)=0$, we obtain the conservation of the $L^2(\mathcal{G})$--norm of $u(t)$.

The proof of Theorem \ref{theorem 2} is now complete.
\end{proof}

\begin{proof}[Proof of Lemma \ref{regularoperator} ] \corrected{Let us consider without loss generality that $\varepsilon>0$}. Using  
 the representation formula \eqref{kernel} of the integral resolvent for any $\varphi=(\varphi_j)_{j=1}^n$ we obtain the following representation
\begin{equation}\label{regrepres}
(I+\varepsilon^2 H)^{-1} \varphi = \frac{1}{2 \varepsilon} \int_{\mathcal{G}} \mathrm{diag}({e^{-\frac{|x-y|}{\varepsilon}}}) \varphi (y) \ dy + \dfrac{1}{2 \varepsilon} \int_{\mathcal{G}} \mathrm{diag} ({e^{-\frac{x}{\varepsilon}}}) G\Big(\frac{\mathrm{i}}{\varepsilon}, A, B\Big) \mathrm{diag} ({e^{-\frac{y}{\varepsilon}}}) \varphi(y) \,dy. 
\end{equation}
Using the representation of matrix $G$ obtained in the proof of Lemma \ref{uniform} as quotient of polynomials we obtain that all the components of the matrix $G(\mathrm{i}/\varepsilon,A,B)$ are uniformly bounded if $\varepsilon$ is small enough (see also  \cite[Proposition 3.11]{inverse} and the proof of \cite[Theorem~3.12]{inverse}), i.e.
\begin{equation*} 
\Big\| G\Big(\frac{\mathrm{i}}{\varepsilon},A,B \Big) \Big\| \leq \mathcal{C}(A,B) < \infty, 
\end{equation*}
for all ${1}/{\varepsilon} > \rho > \rho_0 \geq 0$, where $\rho_0$ is the radius of the disk containing all the zeros of $\det(A + i k B)$, $k \in \mathbb{C}$, which lie on the imaginary axis. Using Young's inequality and the fact that the map $K_\varepsilon(x)= e^{-\varepsilon|x|}/\varepsilon$ belongs to $L^1(\rr)$ with a norm independent of $\varepsilon$ we obtain the first estimate.

The third estimate follows by applying Lebesgue's dominated convergence theorem in the time variable and the first two properties in this lemma. It remains to concentrate on proving the second property.

We recall that in the case of an even function $K\in L^1(\rr,1+|x|^2)$ it has been proved in \cite[Lemma 2.2]{denisa} that for any $1\leq p\leq \infty$
\[
\|K_\varepsilon\ast \phi -\phi\|_{L^p(\rr)}\leq C(K)\varepsilon^2 \|\phi''\|_{L^p(\rr)}, \quad \corrected{\forall \ \phi\in W^{2,p}(\rr)}.
\]
By an approximation argument we have that for any $\phi\in L^p(\rr)$, $1\leq p<\infty$, the following holds
\begin{equation}
	\label{conv.regular}
K_\varepsilon\ast \phi \rightarrow \phi \quad \text{in}\ L^p(\rr),\ \text{as} \ \varepsilon\rightarrow 0.
\end{equation}

In order to prove the convergence in $L^p({\mathcal{G}})$ with $1\leq p<\infty$, we will treat each integral separately and
we show that 
\begin{equation*}
	T^1_\varepsilon(\varphi)= \frac{1}{2 \varepsilon} \int_{\mathcal{G}} \mathrm{diag}({e^{-\frac{|x-y|}{\varepsilon}}}) \varphi (y) \ dy\rightarrow \varphi\quad 
	\text{in}\ L^p({\mathcal{G}})
\end{equation*}
and
\begin{equation}
	\label{lim.2}
	\dfrac{1}{2 \varepsilon} \int_{\mathcal{G}} \mathrm{diag} ({e^{-\frac{x}{\varepsilon}}}) G\Big(\frac{\mathrm{i}}{\varepsilon}, A, B\Big) \mathrm{diag} ({e^{-\frac{y}{\varepsilon}}}) \varphi(y) \ dy\rightarrow 0 \quad 
	\text{in}\ L^p({\mathcal{G}}).
\end{equation}
 Note that each component of $T^1_\varepsilon(\varphi)$ is explicitly given by 
\begin{equation*}
[T^1_\varepsilon(\varphi)]_j(x_j)= \dfrac{1}{2 \varepsilon} \int_0^{\infty} {e^{-\frac{|x_j-y_j|}{\varepsilon}}} \varphi_j (y) \ dy,\ x_j\in (0,\infty), \ j\in \{1,\dots,n\}.
\end{equation*}
We will prove that $[T^1_\varepsilon(\varphi)]_j\rightarrow \varphi_j$ in $ L^p(0, \infty)$. To do that we observe that 
\[
[T^1_\varepsilon(\varphi)]_j(x)=( {K}_\varepsilon \ast \tilde{\varphi_j} ) (x), \ x\in (0,\infty),
\]
where
\begin{equation*} 
\widetilde{\varphi}_j(y)=
\left\{
\begin{array}{ll}
\varphi_j(y), & \text{if } y \geq 0, \\[8pt]
0,         & \text{otherwise}.
\end{array}
\right.
\end{equation*}
Using \eqref{conv.regular} we obtain that ${K}_{\varepsilon} \ast \tilde{\varphi}_j \rightarrow \widetilde{\varphi}_j$ in $L^p(\mathbb{R})$ as $\varepsilon\rightarrow 0$. Thus, 
\[
[T^1_\varepsilon(\varphi)]_j=(K_\varepsilon \ast \tilde\varphi) |_{x>0}\rightarrow \tilde \varphi_j|_{x>0}=\varphi_j \quad \text{in}\ L^p(0, \infty).
\]

Let us now prove \eqref{lim.2}. Since the elements of the matrix $G(i/\varepsilon,A,B)$ are uniformly bounded for small $\varepsilon$ it is sufficient to prove that  
for any $\phi \in L^p(0,\infty)$ the following holds
 \begin{equation*} 
T^2_{\varepsilon}(\phi)(x):= \dfrac{1}{2 \varepsilon} \int_0^{\infty} {e^{-\frac{x+y}{\varepsilon}}} \phi (y) \ dy \rightarrow 0 \quad in \ L^p(0,\infty).
\end{equation*}
Observe again that for any $x\in (0,\infty)$ we have  $T^2_{\varepsilon} (\phi)(x)=(K_\varepsilon\ast \tilde \phi) (x)$, where this time
\begin{equation*} 
\tilde{\phi}(y)=
\begin{cases}
0, & \text{if } y > 0, \\
\phi(-y),  & \text{otherwise}.
\end{cases}
\end{equation*}
Using again \eqref{conv.regular} we obtain that ${K}_{\varepsilon} \ast \tilde{\phi}\rightarrow \tilde{\phi}$ in $L^p(\mathbb{R})$ as $\varepsilon\rightarrow 0$. Thus, 
\[
T^2_\varepsilon(\phi)=(K_\varepsilon \ast \tilde\phi )_{x>0}\rightarrow\tilde{\phi}|_{x>0}=0 \quad \text{in}\ L^p(0, \infty).
\]
The proof is now completed.
\end{proof}

\vspace{10pt}

\subsection{\corrected{Well posedness in $D(\mathcal{E})$ of the nonlinear Schr\"{o}dinger equation}}

\corrected{In this subsection we give the proof of Theorem \ref{theorem 3}.  We show that if the initial data $u_0$ is in $D(\mathcal{E})$, the $L^2$--solution $u$ obtained in Theorem \ref{theorem 2} belongs in fact to $C(\mathbb{R}; D(\mathcal{E})) \cap C^1(\mathbb{R};D(\mathcal{E})^*)$. We recall that in the case of general coupling conditions, the form domain $D(\mathcal{E})$ is not necessarily preserved by the nonlinearity. Hence we cannot rely on a fixed point technique to show that equation \eqref{system nonlinear} has a solution in $C((-T,T), D(\mathcal{E}))$. However, when $p\in (1,5)$ we know by Theorem \ref{theorem 2} that there exists a solution $u$ in $C(\rr, L^2(\mathcal{G}))$.
To overcome this difficulty, we first regularize the nonlinearity and obtain  a global solution $u^\varepsilon \in C(\mathbb{R}; D(\mathcal{E})) \cap C^1(\mathbb{R};D(\mathcal{E})^*)$. Then we show that the sequence $u^\varepsilon$ strongly converges to the solution $u$ in $C([-T,T],L^2(\mathcal{G}))$, for any $T>0$. Finally, we prove that the limit function $u$ inherits the regularity properties of $u^\varepsilon$  as well as the conservation of the energy.} \\


\color{black} Let us now consider the regularized problem:

    \begin{equation*}
(\mathcal{P_{\varepsilon}}): 
\left\{
\begin{array}{lll}
\mathrm{i} u^{\varepsilon}_t (t,x)-H u^{\varepsilon}(t,x)+g_{\varepsilon}(u^{\varepsilon})=0, & t \neq 0, & x \in \mathcal{G}\\[5pt]
u^{\varepsilon}(0,x)=u_0(x), & & x \in \mathcal{G}
\end{array},
\right.
\end{equation*}

\vspace{5pt}

\noindent where $g(u)=\lambda |u|^{p-1}u$, with $\lambda \in \mathbb{R}$, $p>1$ and 
\begin{equation*}
g_{\varepsilon}(v)=J_{\varepsilon} g (J_{\varepsilon} v),
\end{equation*}
with $J_{\varepsilon}:=(I+\varepsilon H)^{-1}$, $\varepsilon\in (0,\varepsilon_0^2)$, $\varepsilon_0$ being as in Lemma \ref{regularoperator}. In the following we will assume this smallness condition without making it explicit each time. 
We will first prove well-posedness of the problem $(\mathcal{P}_{\varepsilon})$ in the energy space $D(\mathcal{E})$. Observe that in this case the nonlinearity $g_{\varepsilon}$ maps the functions from $D(\mathcal{E})$ to itself. This is not true in the case of $g(u)=\lambda |u|^{p-1}u$. Indeed, for $u \in D(\mathcal{E})$, we have $u \in H^1(\mathcal{G})$ so $g(u) \in H^1(\mathcal{G})$, but not necessarily in $D(\mathcal{E})$. This property is easily verified in the case of $\delta$ and $\delta'$--coupling. In the last case, the $D(\mathcal{E})=H^1(\mathcal{G})$ and the property is trivially satisfied. In the case of $\delta$--coupling, the energy space is given by $ D(\mathcal{E})= \{  u \in H^1(\mathcal{G}) \, : \, u \ continuous \ at \ x=0  \}, $ and then $g(u)$ is also continuous in $x=0$.

\begin{prop}\label{globalregularized}
Let $1<p<\infty$ and $\lambda \in \mathbb{R}$. For any $u_0 \in D(\mathcal{E})$, there exists a unique solution $u^{\varepsilon} \in C(\mathbb{R}; D(\mathcal{E})) \cap C^1(\mathbb{R};D(\mathcal{E})^*)$ of problem $(\mathcal{P}_{\varepsilon})$.

Moreover, $u^{\varepsilon}$ satisfies
\begin{enumerate}[label=(\roman*)]

\item $\| u^{\varepsilon}(t) \|_{L^2(\mathcal{G})}=\| u_0 \|_{L^2(\mathcal{G}}$,

\item $E(u^{\varepsilon}(t)):= \mathcal{E}(u^\varepsilon(t),u^{\varepsilon}(t)) - \dfrac{\lambda}{p+1} \mathlarger\int_{\mathcal{G}} |u^{\varepsilon}(t,x)|^{p+1} \, dx $ is conserved,
\end{enumerate}

\noindent  for all $t \in \mathbb{R}$.
\end{prop}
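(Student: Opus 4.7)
The key observation is that the double regularization $g_\varepsilon(v)=J_\varepsilon g(J_\varepsilon v)$, with $J_\varepsilon=(I+\varepsilon H)^{-1}$, is a \emph{smoothing} nonlinearity. Indeed, by Lemma~\ref{regularization}, $J_\varepsilon$ maps $L^2(\mathcal{G})$ continuously into $D(H)\hookrightarrow H^2(\mathcal{G})\hookrightarrow L^\infty(\mathcal{G})$, so that $g_\varepsilon$ is well-defined from $L^2(\mathcal{G})$ into $D(H)\subset D(\mathcal{E})$, and the standard pointwise estimate $\bigl||z|^{p-1}z-|w|^{p-1}w\bigr|\leq C(|z|^{p-1}+|w|^{p-1})|z-w|$ upgrades to local Lipschitz continuity
\[
	\|g_\varepsilon(v)-g_\varepsilon(w)\|_{D(\mathcal{E})}\leq C(A,B,\varepsilon,R)\,\|v-w\|_{L^2(\mathcal{G})},\qquad \|v\|_{L^2},\|w\|_{L^2}\leq R.
\]
This is in sharp contrast with the unregularized nonlinearity $g(u)=\lambda|u|^{p-1}u$, which under general coupling conditions need not leave $D(\mathcal{E})$ invariant.

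\textbf{Step 1: local existence in $D(\mathcal{E})$.} I would run a Banach contraction on the Duhamel map
\[
\Phi(v)(t)=e^{-itH}u_0+i\int_0^t e^{-i(t-s)H}g_\varepsilon(v(s))\,ds
\]
in a ball of $C([-T,T],D(\mathcal{E}))$. The group $(e^{-itH})$ is unitary on $L^2(\mathcal{G})$ and preserves $D(\mathcal{E})$ with the form norm \eqref{formnorm} (being a Borel function of the self-adjoint $H$), and $g_\varepsilon$ is locally Lipschitz by the estimate above, so choosing $T=T(\|u_0\|_{D(\mathcal{E})},\varepsilon)$ small enough makes $\Phi$ a strict contraction, producing a unique local mild solution $u^\varepsilon\in C([-T,T],D(\mathcal{E}))$.

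\textbf{Step 2: $L^2$-conservation and globalization.} Testing the equation against $u^\varepsilon$ in the $D(\mathcal{E})^*\times D(\mathcal{E})$ duality, $\mathcal{E}(u^\varepsilon,u^\varepsilon)$ is real, and by the self-adjointness of $J_\varepsilon$ (inherited from that of $H$)
\[
\langle g_\varepsilon(u^\varepsilon),u^\varepsilon\rangle=\langle g(J_\varepsilon u^\varepsilon),J_\varepsilon u^\varepsilon\rangle=\lambda\int_{\mathcal{G}}|J_\varepsilon u^\varepsilon|^{p+1}\,dx\in\mathbb{R},
\]
so the imaginary part yields $\tfrac{d}{dt}\|u^\varepsilon\|_{L^2}^2=0$. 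The Duhamel bound $\|u^\varepsilon(t)\|_{D(\mathcal{E})}\leq\|u_0\|_{D(\mathcal{E})}+C_\varepsilon|t|\,\|u_0\|_{L^2}^p$ then rules out finite-time blow-up and extends $u^\varepsilon$ to all of $\mathbb{R}$. The equation $u^\varepsilon_t=-iHu^\varepsilon+ig_\varepsilon(u^\varepsilon)$, with $H:D(\mathcal{E})\to D(\mathcal{E})^*$ bounded and $g_\varepsilon(u^\varepsilon)\in L^2\hookrightarrow D(\mathcal{E})^*$ continuous in time, then yields $u^\varepsilon\in C^1(\mathbb{R},D(\mathcal{E})^*)$.

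\textbf{Step 3: energy conservation (the main obstacle).} Formally, pairing the equation with $\bar u^\varepsilon_t$ and applying the chain rule together with the self-adjointness of $J_\varepsilon$ yields an energy identity of the expected form $\tfrac{d}{dt}E(u^\varepsilon)=0$. The honest difficulty is that a priori $u^\varepsilon_t$ lives only in $D(\mathcal{E})^*$, so the bracket $\langle Hu^\varepsilon,u^\varepsilon_t\rangle$ is not directly a legitimate duality pairing. I would circumvent this by a \emph{density/bootstrap} argument: rerun the contraction of Step~1 inside $C(I,D(H))$ (possible because $g_\varepsilon$ maps $L^2(\mathcal{G})$ into $D(H)$), so that for $u_0\in D(H)$ one obtains a strong solution $u^\varepsilon\in C(\mathbb{R},D(H))\cap C^1(\mathbb{R},L^2(\mathcal{G}))$ for which the pairing and the resulting identity are fully rigorous pointwise in $t$. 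For $u_0\in D(\mathcal{E})$ only, I would approximate by $u_0^n=J_{1/n}u_0\in D(H)$ (which converges to $u_0$ in $D(\mathcal{E})$ by Lemma~\ref{regularization}), invoke continuous dependence on initial data obtained by applying Gronwall to the Lipschitz estimate of Step~1, and pass to the limit in the energy identity using continuity of $v\mapsto\mathcal{E}(v,v)$ and of the potential energy term on $D(\mathcal{E})$.
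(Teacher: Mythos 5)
Your proposal is correct in substance but takes a genuinely different route: the paper's proof is essentially a one-line application of the abstract existence theorem \cite[Theorem~3.3.1, p.63]{cazenave}, whose only nontrivial hypothesis to verify is that $g_\varepsilon$ is Lipschitz continuous on bounded sets of $L^2(\mathcal{G})$ with values in $L^2(\mathcal{G})$; this is checked exactly as you do, via the embedding $H^1(\mathcal{G})\hookrightarrow L^\infty(\mathcal{G})$, the equivalence of norms \eqref{equivnorms}, and the bound $\|J_\varepsilon v\|_{D(\mathcal{E})}\leq \varepsilon^{-1}\|v\|_{L^2(\mathcal{G})}$ from Lemma~\ref{dualestimates}. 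You instead reconstruct the proof of that theorem from scratch (contraction in $C([-T,T],D(\mathcal{E}))$, conservation laws by duality pairing, globalization, and a $D(H)$-bootstrap plus density for the energy identity). What the paper's approach buys is brevity and a clean separation of the only $\varepsilon$-dependent input; what yours buys is self-containedness and an explicit treatment of the regularity issue behind the energy identity, which the citation hides. Your Step 3 strategy (solve in $C(I,D(H))\cap C^1(I,L^2)$ for $u_0\in D(H)$, then approximate and use Gronwall in the $D(\mathcal{E})$-norm) is exactly the mechanism inside Cazenave's proof and is sound.

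Two points of precision. First, be explicit about \emph{which} energy is conserved: the chain-rule computation you sketch, using the self-adjointness of $J_\varepsilon$, produces
\begin{equation*}
\mathcal{E}(u^\varepsilon(t),u^\varepsilon(t))-\frac{\lambda}{p+1}\int_{\mathcal{G}}|J_\varepsilon u^\varepsilon(t,x)|^{p+1}\,dx = \text{const},
\end{equation*}
with $J_\varepsilon$ \emph{inside} the potential term, since the Hamiltonian of $(\mathcal{P}_\varepsilon)$ is $\mathcal{E}(v,v)-\tfrac{\lambda}{p+1}\int|J_\varepsilon v|^{p+1}$ and not the functional with $|u^\varepsilon|^{p+1}$ written in item (ii) of the statement; the paper's own proof and the subsequent uniform estimates (Proposition~\ref{estimateueps}) use the $J_\varepsilon$-version, so your "expected form" must be this one or the identity fails. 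Second, the convergence $J_{1/n}u_0\to u_0$ in $D(\mathcal{E})$ is not what Lemma~\ref{regularization} gives (that lemma only yields $L^p(\mathcal{G})$ convergence); you need the standard spectral-calculus fact that $(I+\delta H)^{-1}\to I$ strongly on the form domain of a semibounded self-adjoint operator, which is true but should be justified separately.
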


\begin{proof}
Applying \cite[Theorem 3.3.1, p.63]{cazenave} we obtain the existence of a global solution
\begin{equation*}
u^{\varepsilon} \in C(\mathbb{R};D(\mathcal{E})) \cap C^1(\mathbb{R};D(\mathcal{E})^*)
\end{equation*}
 which satisfies the conservation of the $L^2(\mathcal{G})$-norm
\begin{equation*}
\| u^{\varepsilon}(t) \|_{L^2(\mathcal{G})}=\| u_0 \|_{L^2(\mathcal{G})}
\end{equation*}
 and of the energy
\begin{equation*}
\begin{aligned}
E(u^{\varepsilon}(t)):&= \mathcal{E}(u^\varepsilon(t),u^{\varepsilon}(t)) - \dfrac{\lambda}{p+1} \int_{\mathcal{G}} |J_\varepsilon u^{\varepsilon}(t,x)|^{p+1} \, dx\\
&=\mathcal{E}(u_0,u_0) - \dfrac{\lambda}{p+1} \int_{\mathcal{G}} |J_\varepsilon u_0(x)|^{p+1} \, dx.
\end{aligned}
\end{equation*}
 This is possible since the nonlinearity $g_{\varepsilon}$ is Lipschitz continuous on bounded sets of $L^2(\mathcal{G})$. 
  By Lemma \ref{regularoperator} $(i)$ $J_\varepsilon$ maps $L^2(\mathcal{G})$ to itself uniformly with respect to $\varepsilon$.  Thus, for  any $v$ and $w$ in $L^2(\mathcal{G})$    we obtain  the following
\begin{align*}
\| g_{\varepsilon}(v) -g_{\varepsilon}(w)  \|_{L^2(\mathcal{G})}& = \| J_{\varepsilon} g (J_{\varepsilon} v) - J_{\varepsilon} g (J_{\varepsilon} w) \|_{L^2(\mathcal{G})}
\leq C(A,B) \| g(J_{\varepsilon} v) - g(J_{\varepsilon} w) \|_{L^2(\mathcal{G})}\\
& \leq C(A,B,p)  \| J_{\varepsilon} v - J_{\varepsilon} w  \|_{L^2(\mathcal{G})}  \big( \| J_{\varepsilon} v \|^{p-1}_{L^{\infty}(\mathcal{G})} + \| J_{\varepsilon} w \|^{p-1}_{L^{\infty}(\mathcal{G})}  \big).
\end{align*}
We now use that $H^1(\mathcal{G})$ is continuous embedded in $L^{\infty}(\mathcal{G})$. Since $J_{\varepsilon}v$ and $J_{\varepsilon}w$ belong to the energy space $D(\mathcal{E})$, we can apply the equivalence of norms in \eqref{equivnorms} to obtain 
\[
\| g_{\varepsilon}(v) -g_{\varepsilon}(w)  \|_{L^2(\mathcal{G})}\leq C(A,B,p) \| v-w \|_{L^2(\mathcal{G})}  \big( \| J_{\varepsilon}v \|^{p-1}_{D(\mathcal{E})} + \| J_{\varepsilon}w \|^{p-1}_{D(\mathcal{E})}  \big).
\]
We rely on the following result on $J_\varepsilon$ that we will prove latter. 
\begin{lem}\label{dualestimates}Let  $M$ as in \eqref{formnorm}. 
For any positive $\varepsilon$, with $\varepsilon<1/M$ the following holds
\begin{equation}\label{dual.inequality}
\varepsilon \| (I+\varepsilon H)^{-1} f \|_{D(\mathcal{E})} \leq   \| f \|_{D(\mathcal{E})^*}, \quad \forall f \in D(\mathcal{E})^*.
\end{equation}
\end{lem}
This gives us that 
\[
\|J_\varepsilon v\|_{D(\mathcal{E})}\leq \frac 1\varepsilon \|v\|_{D(\mathcal{E})^*}\leq \frac 1\varepsilon \|v\|_{L^2(\mathcal{G})}
\]
and
\[
\| g_{\varepsilon}(v) -g_{\varepsilon}(w)  \|_{L^2(\mathcal{G})}\leq 
\frac {C(A,B,p)}{\varepsilon^{p-1} }\| v-w  \|_{L^2(\mathcal{G})} \big( \| v \|^{p-1}_{L^2(\mathcal{G})} + \|w\|^{p-1}_{L^2(\mathcal{G})} \big).
\]
This proves that $g_\varepsilon$ is is Lipschitz continuous on bounded sets of $L^2(\mathcal{G})$ and the results in \cite[Theorem 3.3.1, p.63]{cazenave}  apply. 
 Note that since we only want to prove the existence of solutions, the dependence of the constant on $\varepsilon$ is not an issue in this case.
\end{proof}

In the following proposition we obtain uniform estimates for the sequence of solutions $(u^{\varepsilon})_{\varepsilon>0}$.
\begin{prop}\label{estimateueps} Let $u_0$ in $D(\mathcal{E})$
and $u^{\varepsilon}$ the solution of $(\mathcal{P_{\varepsilon}})$ as in Proposition \ref{globalregularized}. If one of the following conditions holds:
\begin{enumerate}
\item $\lambda <0$,
\item $\lambda > 0$ and $1<p <5$,
\end{enumerate}
 then there exists a positive constant $C(\|u_0\|_{D(\mathcal{E})})$ such that 
 for all small enough $\varepsilon>0$ 
\begin{equation}\label{estimateueps1}
\|u^{\varepsilon} \|_{C(\mathbb{R}; D(\mathcal{E}))} \leq C(\|u_0\|_{D(\mathcal{E})}).
\end{equation}

Moreover, if $1<p<5$, 
\begin{equation}\label{estimateueps2}
\| u^{\varepsilon} \|_{L^{q_0}((-T,T);L^{r_0}(\mathcal{G}))} \leq C(T,\lambda,  \| u_0 \|_{L^2(\mathcal{G})}),
\end{equation}
where $(q_0,r_0)=(4(p+1)/(p-1),p+1)$.
\end{prop}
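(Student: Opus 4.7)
The plan is to derive \eqref{estimateueps1} from the two conservation laws of Proposition \ref{globalregularized} together with the norm equivalence \eqref{equivnorms}, and to obtain \eqref{estimateueps2} by iterating the local contraction argument of Theorem \ref{theorem 2} with constants that are uniform in $\varepsilon$.

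For \eqref{estimateueps1}, since \eqref{equivnorms} makes $\|u^\varepsilon(t)\|_{D(\mathcal{E})}^2$ comparable to $M\|u^\varepsilon(t)\|_{L^2(\mathcal{G})}^2 + \mathcal{E}(u^\varepsilon(t),u^\varepsilon(t))$, it suffices to control the latter. The mass conservation fixes the $L^2$-part, and the energy conservation gives
\[
\mathcal{E}(u^\varepsilon(t),u^\varepsilon(t)) = \mathcal{E}(u_0,u_0) + \frac{\lambda}{p+1}\bigl(\|J_\varepsilon u^\varepsilon(t)\|_{L^{p+1}(\mathcal{G})}^{p+1} - \|J_\varepsilon u_0\|_{L^{p+1}(\mathcal{G})}^{p+1}\bigr).
\]
When $\lambda<0$ the first $L^{p+1}$-term has the opposite sign and can be dropped, while the second is controlled by $\|u_0\|_{D(\mathcal{E})}^{p+1}$ using Lemma \ref{regularoperator}(i), the Sobolev embedding $H^1(\mathcal{G})\hookrightarrow L^{p+1}(\mathcal{G})$ and \eqref{equivnorms}, which already yields the uniform bound. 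When $\lambda>0$ with $1<p<5$ I would instead drop the $u_0$ piece and bound $\|J_\varepsilon u^\varepsilon\|_{L^{p+1}}$ by a constant multiple of $\|u^\varepsilon\|_{L^{p+1}}$ via Lemma \ref{regularoperator}(i); the 1D Gagliardo--Nirenberg inequality applied edge by edge (and then summed using $\|u_j\|_{H^1(I_j)}\leq \|u\|_{H^1(\mathcal{G})}$) provides
\[
\|u^\varepsilon\|_{L^{p+1}(\mathcal{G})}^{p+1} \leq C\,\|u^\varepsilon\|_{L^2(\mathcal{G})}^{(p+3)/2}\,\|u^\varepsilon\|_{H^1(\mathcal{G})}^{(p-1)/2}.
\]
Setting $Y(t)=\|u^\varepsilon(t)\|_{D(\mathcal{E})}^2$ and invoking mass conservation, the previous two displays combine into
\[
Y(t) \leq C_1(\|u_0\|_{D(\mathcal{E})}) + C_2(\|u_0\|_{L^2(\mathcal{G})},\lambda,p)\, Y(t)^{(p-1)/4},
\]
in which the exponent $(p-1)/4<1$ precisely because $p<5$; Young's inequality then absorbs the nonlinear term into the left-hand side and closes the estimate.

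For \eqref{estimateueps2} my strategy is to rerun the local-in-time contraction of Step I of the proof of Theorem \ref{theorem 2} on the regularized nonlinearity $g_\varepsilon$. Because Lemma \ref{regularoperator}(i) makes $J_\varepsilon$ bounded on every $L^q(\mathcal{G})$ uniformly in $\varepsilon$, the estimate \eqref{ineg.neliniara} carries over to $g_\varepsilon$ with $\varepsilon$-independent constants, and Corollary A\ref{corollaryA1} yields a local length $T^{\ast}=T^{\ast}(\|u_0\|_{L^2(\mathcal{G})},\lambda,p)$ together with a Strichartz bound on $(-T^{\ast},T^{\ast})$ depending only on the same quantities. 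Since the $L^2(\mathcal{G})$-mass is preserved along the evolution, the very same $T^{\ast}$ may be used at each iteration, and covering $(-T,T)$ by $\lceil T/T^{\ast}\rceil$ such subintervals (concatenating the Strichartz bounds) produces \eqref{estimateueps2}.

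The main obstacle lies in the focusing regime $\lambda>0$ of \eqref{estimateueps1}: without the mass-subcritical threshold $p<5$ the Gagliardo--Nirenberg exponent $(p-1)/4$ would reach $1$, Young's absorption would fail, and no a priori $D(\mathcal{E})$-bound could be extracted from the conservation laws. The uniform-in-$\varepsilon$ $L^q$ control of $J_\varepsilon$ furnished by Lemma \ref{regularoperator}(i) is equally crucial, since the nonlinearity enters the problem through $J_\varepsilon u^\varepsilon$ rather than through $u^\varepsilon$ itself.
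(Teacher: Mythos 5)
Your proposal is correct and follows essentially the same route as the paper: the uniform $D(\mathcal{E})$-bound comes from the two conservation laws plus the (star-graph) Gagliardo--Nirenberg inequality with the subcritical exponent $(p-1)/2<2$ allowing absorption, and the uniform Strichartz bound comes from the $\varepsilon$-independent local estimate (via Lemma \ref{regularoperator}(i) and inequality \eqref{ineg.neliniara}) iterated over intervals of fixed length using $L^2$-conservation. The only cosmetic difference is that the paper extracts the local $L^{q_0}L^{r_0}$ bound directly from the integral equation rather than by re-running the contraction, but the mechanism and the role of each hypothesis are identical.
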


\begin{proof}
From Proposition \ref{globalregularized} $(i)-(ii)$ and \eqref{formnorm}, we have for all $t \in \mathbb{R}$
\begin{equation*}
\| u^{\varepsilon} (t) \|^2_{D(\mathcal{E})}-\dfrac{\lambda}{p+1} \int_{\mathcal{G}} |J_\varepsilon u^{\varepsilon}(t,x)|^{p+1} \, dx =\|u_0\|^2_{D(\mathcal{E})}-\dfrac{\lambda}{p+1} \int_{\mathcal{G}} |J_\varepsilon u_0(x)|^{p+1} \, dx.
\end{equation*}
When $\lambda <0$, we easily obtain the desired estimate for $u^{\varepsilon}$, since  $J_\varepsilon$ is uniformly bounded between  $L^p(\mathcal{G})$ spaces. 
%
Consider now the case $\lambda>0$. By the one dimensional Gagliardo-Nirenberg inequality generalized to star-graphs \cite[inequality (2.3)]{adamivariational} and the conservation of the $L^2$-norm, we have that
\begin{equation}\label{gagliardonirenberg}
\begin{aligned}
\| u^{\varepsilon} (t)\|^{p+1}_{L^{p+1}(\mathcal{G})} &\leq c \| (u^{\varepsilon})' (t)\|^{(p-1)/2}_{L^2(\mathcal{G})} \, \| u^{\varepsilon} (t)\|^{(p+3)/2}_{L^2(\mathcal{G})} \leq c \| u^{\varepsilon}(t) \|^{(p-1)/2}_{D(\mathcal{E})} \| u_0 \|^{(p+3)/2}_{L^2(\mathcal{G})}.
\end{aligned}
\end{equation}

\noindent From the conservation of the energy $E(u^{\varepsilon}(t))$ in Proposition \ref{globalregularized} $(ii)$, we can also write that
\begin{equation*}
\begin{aligned}
\| u^{\varepsilon} (t)\|^2_{D(\mathcal{E})} &\leq \|u_0\|^2_{D(\mathcal{E})}+ \dfrac{\lambda}{p+1} \int_{\mathcal{G}} |J_{\varepsilon} u^{\varepsilon}(t)|^{p+1} \, dx\\
& \leq \|u_0\|^2_{D(\mathcal{E})} + \dfrac{\lambda C(A,B)}{p+1} \| u^{\varepsilon} (t)\|^{p+1}_{L^{p+1}(\mathcal{G})} \\
& \leq  \|u_0\|^2_{D(\mathcal{E})} + \dfrac{\lambda C(A,B)}{p+1} \| u^{\varepsilon} (t)\|^{(p-1)/2}_{D(\mathcal{E})} \, \| u_0 \|^{(p+3)/2}_{L^2(\mathcal{G})},
\end{aligned}
\end{equation*}

\noindent where the first inequality follows after applying Lemma \ref{regularization} $(i)$, and the last one is obtained via the Gagliardo-Nirenberg estimate in \eqref{gagliardonirenberg}. Since $p<5$, 
 the desired estimate in  \eqref{estimateueps1} is immediately obtained.\\

Let us now consider now the sharp $1/2$--admissible pair $(q_0,r_0)$ as above and $T<1$. By Corollary A\ref{corollaryA1} and inequality \eqref{ineg.neliniara} we obtain that for some constant $C(p)=C(q_0,r_0)$ 
\begin{equation*}
\begin{aligned}
\| u^{\varepsilon} \|_{L^{q_0}((-T,T);L^{r_0}(\mathcal{G}))} &\leq \| \mathrm{e}^{-\mathrm{i}t H} u_0 \|_{L^{q_0}((-T,T);L^{r_0}(\mathcal{G}))} +  \bigg\| \int_0^t \mathrm{e}^{-\mathrm{i}(t-s) H} g_{\varepsilon}(u^{\varepsilon}(s)) \, ds \bigg\|_{L^{q_0}((-T,T);L^{r_0}(\mathcal{G}))}\\
&\leq C(p) \| u_0 \|_{L^2(\mathcal{G})} +C(p) \| g_{\varepsilon}(u^{\varepsilon}) \|_{L^{q'_0}((-T,T);L^{r'_0}(\mathcal{G}))}\\
&\leq C(p)  \| u_0 \|_{L^2(\mathcal{G})} + |\lambda|C(p) T^{(5-p)/4} \|u^{\varepsilon} \|^p_{L^{q_0}((-T,T);L^{r_0}(\mathcal{G}))}.
\end{aligned}
\end{equation*}
Hence, there exists a time $T_0=T_0(p,\lambda,\|u_0\|_{L^2(\mathcal{G})})\simeq \min\{ \|u_0\|_{L^2(\mathcal{G})}^{-4(p-1)/(5-p)}, 1\}$, independent of $\varepsilon$, such that for any $T<T_0$,
\begin{equation*}
\| u^{\varepsilon}(t) \|_{L^{q_0}((-T,T);L^{r_0}(\mathcal{G})} \leq  2 C(p)  \| u_0 \|_{L^2(\mathcal{G})}.
\end{equation*}
Repeating the above argument on any time interval of length $2T_0$ and using the conservation of the $L^2$-norm of the solution we obtain the desired estimate.
\end{proof}

Let us recall that for any initial data $u_0$ in $L^2(\mathcal{G})$ and $1<p<5$, by Theorem \ref{theorem 2} we have that there exists a unique solution $u \in C(\mathbb{R};L^2(\mathcal{G}))\cap L^{q_0}_{loc}(\rr, L^{r_0}(\mathcal{G}))$ of problem \eqref{system nonlinear}. In particular, this holds true for $u_0 \in D(\mathcal{E})$. Let us prove now the following result.

\begin{prop}
Let $\lambda \in \mathbb{R}$ and $1<p<5$. Let $u_0 \in D(\mathcal{E})$, $u^{\varepsilon}$ the solution of problem $(\mathcal{P_{\varepsilon}})$ given by Proposition \ref{globalregularized}, and $u$ the solution of \eqref{system nonlinear} as in Theorem \ref{theorem 2}. Then for any $T>0$
\begin{equation*}
\| u^{\varepsilon} - u \|_{C([-T,T]; L^2(\mathcal{G}))} \rightarrow 0 \ as \ \varepsilon \to 0.
\end{equation*}
\end{prop}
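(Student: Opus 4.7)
My strategy is a direct comparison via the Duhamel representations of the two solutions, combined with Strichartz estimates and the smoothing/boundedness properties of $J_\varepsilon$ collected in Lemma \ref{regularoperator}. Writing
\begin{equation*}
u(t) = \mathrm{e}^{-\mathrm{i}tH} u_0 + \mathrm{i}\int_0^t \mathrm{e}^{-\mathrm{i}(t-s)H} g(u(s))\,ds, \qquad u^{\varepsilon}(t) = \mathrm{e}^{-\mathrm{i}tH} u_0 + \mathrm{i}\int_0^t \mathrm{e}^{-\mathrm{i}(t-s)H} g_{\varepsilon}(u^{\varepsilon}(s))\,ds,
\end{equation*}
the difference $w^{\varepsilon}:=u^{\varepsilon}-u$ satisfies a Duhamel-type identity whose right-hand side only involves the nonlinear defect $g_{\varepsilon}(u^{\varepsilon}) - g(u)$. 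The key algebraic step is to split this defect as
\begin{equation*}
g_{\varepsilon}(u^{\varepsilon}) - g(u) = J_{\varepsilon}\bigl[g(J_{\varepsilon} u^{\varepsilon}) - g(J_{\varepsilon} u)\bigr] + J_{\varepsilon}\bigl[g(J_{\varepsilon} u) - g(u)\bigr] + (J_{\varepsilon}-I)g(u),
\end{equation*}
which isolates the contribution of $w^{\varepsilon}$ itself from two purely $u$-dependent contributions that must vanish as $\varepsilon \to 0$.

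Fixing the sharp admissible pair $(q_0, r_0) = (4(p+1)/(p-1), p+1)$ and applying Corollary A\ref{corollaryA1} to $w^{\varepsilon}$ on a small interval $[-T, T]$, I would bound the norm of $w^{\varepsilon}$ in $L^\infty((-T,T);L^2(\mathcal{G})) \cap L^{q_0}((-T,T);L^{r_0}(\mathcal{G}))$ by the $L^{q_0'}((-T,T);L^{r_0'}(\mathcal{G}))$-norm of each of the three terms. For the first term, the nonlinear estimate \eqref{ineg.neliniara}, combined with the uniform $L^p$-continuity of $J_{\varepsilon}$ from Lemma \ref{regularoperator}(i) and the uniform a priori bounds on $u$ and $u^{\varepsilon}$ in $L^{q_0}((-T,T);L^{r_0}(\mathcal{G}))$ coming from Theorem \ref{theorem 2} and Proposition \ref{estimateueps}, yields a bound of the form
\begin{equation*}
c\,|\lambda|\, T^{(5-p)/4}\, K\, \|w^{\varepsilon}\|_{L^{q_0}((-T,T);L^{r_0}(\mathcal{G}))},
\end{equation*}
with $K$ independent of $\varepsilon$. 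Choosing $T_0$ sufficiently small (depending only on the conserved $L^2$-norm of $u_0$) allows this term to be absorbed into the left-hand side. For the remaining two terms, one more application of \eqref{ineg.neliniara} reduces matters to the convergences $J_{\varepsilon} u \to u$ in $L^{q_0}((-T_0,T_0);L^{r_0}(\mathcal{G}))$ and $(J_{\varepsilon}-I)g(u) \to 0$ in $L^{q_0'}((-T_0,T_0);L^{r_0'}(\mathcal{G}))$; both follow from Lemma \ref{regularoperator}(iii), using that $u \in L^{q_0}_{loc}(\mathbb{R};L^{r_0}(\mathcal{G}))$ and hence $g(u) \in L^{q_0'}_{loc}(\mathbb{R};L^{r_0'}(\mathcal{G}))$.

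The outcome of this step is that $\|u^{\varepsilon}-u\|_{C([-T_0,T_0];L^2(\mathcal{G}))} \to 0$ as $\varepsilon \to 0$. Since $T_0$ depends only on $\|u_0\|_{L^2(\mathcal{G})}$, which is preserved by both flows, I would then iterate the argument on consecutive intervals of length $T_0$, covering any prescribed $[-T,T]$ in finitely many steps. The main obstacle I anticipate is ensuring that every constant appearing in the applications of Corollary A\ref{corollaryA1} and of \eqref{ineg.neliniara} is genuinely $\varepsilon$-independent: this is precisely the role of Lemma \ref{regularoperator}(i), whose uniform $L^p$-boundedness of $J_{\varepsilon}$ for $\varepsilon < \varepsilon_0$ underlies both the absorption step and the passage to the limit in the two remainder terms.
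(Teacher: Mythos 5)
Your proposal is correct and follows essentially the same route as the paper: the same Duhamel comparison, the identical three-term splitting of $g_{\varepsilon}(u^{\varepsilon})-g(u)$, the same use of Corollary A\ref{corollaryA1} with the pairs $(\infty,2)$ and $(q_0,r_0)$, the nonlinear estimate \eqref{ineg.neliniara} together with the uniform bounds of Proposition \ref{estimateueps} to absorb the $w^{\varepsilon}$-term for small $T$, and Lemma \ref{regularoperator} for the two $o(1)$ remainders. Your explicit iteration over intervals of length $T_0$ (justified by conservation of the $L^2$-norm) is a detail the paper leaves implicit, but otherwise the arguments coincide.
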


\begin{proof}In order to simplify the proof 
we prove the estimate for any $T<1$. We use 
 the integral equations satisfied by $u^{\varepsilon}$ and $u$, respectively:
\begin{equation*}
u^{\varepsilon}(t)=\mathrm{e}^{- \mathrm{i} t H} u_0 + \mathrm{i} \lambda \int_0^t \mathrm{e}^{-\mathrm{i}(t-s) H} g_{\varepsilon}(u^{\varepsilon})(s) \, ds,
\end{equation*}

\begin{equation*}
u(t)=\mathrm{e}^{- \mathrm{i} t H} u_0 + \mathrm{i} \lambda \int_0^t \mathrm{e}^{-\mathrm{i}(t-s) H} g(u)(s) \, ds.
\end{equation*}

\noindent Take the sharp $1/2$--admissible pair $(q_0,r_0)=(4(p+1)/(p-1),p+1)$ and $(q,r)=\{(\infty,2), (q_0,r_0)\}$. 
By Corollary A\ref{corollaryA1}  with $T<1$:
\begin{align*}
\| u^{\varepsilon} -u &\|_{L^{q}((-T,T),L^{r}(\mathcal{G})}  = \bigg\| \lambda \int_0^t \mathrm{e}^{-\mathrm{i}(t-s) H} \big[ g_{\varepsilon}(u^{\varepsilon})(s) - g(u)(s)  \big] \, ds \bigg\|_{L^{q}((-T,T),L^{r}(\mathcal{G}))}\\
& \leq C \, |\lambda| \, \| g_{\varepsilon}(u^{\varepsilon}) - g(u) \|_{L^{q_0'}((-T,T),L^{r_0'}(\mathcal{G}))}= c \, |\lambda| \, \| J_{\varepsilon} g (J_{\varepsilon}  u^{\varepsilon}) - g(u)  \|_{L^{q_0'}((-T,T),L^{r_0'}(\mathcal{G}))}.
\end{align*}
By Lemma \ref{regularoperator},
\begin{align*}
 \| J_{\varepsilon}& g (J_{\varepsilon}  u^{\varepsilon}) - g(u)  \|_{L^{q_0'}((-T,T),L^{r_0'}(\mathcal{G}))}\\
& \leq  \| J_{\varepsilon} g(J_{\varepsilon} u^{\varepsilon}) - J_{\varepsilon} g(J_{\varepsilon} u)  \|_{L^{q_0'}((-T,T),L^{r_0'}(\mathcal{G}))}\\
& \qquad \qquad + \| J_{\varepsilon} g(J_{\varepsilon} u) - J_{\varepsilon} g(u)  \|_{L^{q_0'}((-T,T),L^{r_0'}(\mathcal{G}))}+ \| J_{\varepsilon} g(u) - g(u) \|_{L^{q_0'}((-T,T),L^{r_0'}(\mathcal{G}))} \\
& \lesssim  \| g(J_{\varepsilon} u^{\varepsilon}) - g(J_{\varepsilon} u)  \|_{L^{q_0'}((-T,T),L^{r_0'}(\mathcal{G}))}\\
& \qquad \qquad + \|  g(J_{\varepsilon} u) -  g(u)  \|_{L^{q_0'}((-T,T),L^{r_0'}(\mathcal{G}))}+ \| (J_{\varepsilon}-I) g(u) \|_{L^{q_0'}((-T,T),L^{r_0'}(\mathcal{G}))},
\end{align*}
 Let us now treat separately each one of the terms in the right hand side. Using inequality \eqref{ineg.neliniara} 
with $v=J_\varepsilon u^{\varepsilon}$ and $w=J_{\varepsilon} u$, by Lemma \ref{regularoperator} and Corollary A\ref{corollaryA1} we consequently obtain that
\begin{align*}
\| &g(J_\varepsilon u^{\varepsilon}) - g(J_{\varepsilon} u) \|_{L^{q_0'}((-T,T),L^{r_0'}(\mathcal{G}))}\\
 &\leq c |\lambda| T^{\frac{5-p}4}  \| J_\varepsilon u^{\varepsilon}-J_{\varepsilon} u\|_{L^{q_0}((-T,T),L^{r_0}(\mathcal{G}))}  \bigg(\| J_\varepsilon u^{\varepsilon}\|^{p-1}_{L^{q_0}((-T,T),L^{r_0}(\mathcal{G}))} + \| J_{\varepsilon} u\|^{p-1}_{L^{q_0}((-T,T),L^{r_0}(\mathcal{G}))}\bigg)\\
& \leq c|\lambda| T^{\frac{5-p}4}  \, \|  u^{\varepsilon}- u\|_{L^{q_0}((-T,T),L^{r_0}(\mathcal{G}))}  \bigg( \|  u^{\varepsilon}\|^{p-1}_{L^{q_0}((-T,T),L^{r_0}(\mathcal{G}))} + \| u\|^{p-1}_{L^{q_0}((-T,T),L^{r_0}(\mathcal{G}))}\bigg)\\
& \leq c\big(\lambda, \| u_0 \|^{p-1}_{L^2(\mathcal{G})}\big) T^{\frac{5-p}4}\|  u^{\varepsilon}- u \|_{L^{q_0}((-T,T),L^{r_0}(\mathcal{G}))},
\end{align*}

\noindent where in the last inequality we made use of the estimate \eqref{estimateueps2} in Proposition \ref{estimateueps}, which holds also for $u$. Proceeding similarly in the case of the second term with $v=J_\varepsilon u$ and $w=u$, and using Lemma \ref{regularoperator}   with $u\in L^{q_0}((-T,T),L^{r_0}(\mathcal{G}))$
we get
\begin{align*}
\| g(J_\varepsilon u) & - g(u) \|_{L^{q_0'}((-T,T),L^{r_0'}(\mathcal{G}))}\\ &\leq c(\lambda) T^{\frac{5-p}4} \, \| J_\varepsilon u- u\|_{L^{q_0}((-T,T),L^{r_0}(\mathcal{G}))} \bigg( \| J_\varepsilon u\|^{p-1}_{L^{q_0}((-T,T),L^{r_0}(\mathcal{G}))} + \| u\|^{p-1}_{L^{q_0}((-T,T),L^{r_0}(\mathcal{G}))}\bigg)\\
& \leq c\big(\lambda, \| u_0 \|^{p-1}_{L^2(\mathcal{G})}\big)T^{\frac{5-p}4} \|   J_\varepsilon u- u \|_{L^{q_0}((-T,T),L^{r_0}(\mathcal{G}))} = o(1), \quad as \ \varepsilon \to 0.
\end{align*}
 In the case of the last term, since $g(u)\in L^{q_0'}((-T,T),L^{r_0'}(\mathcal{G}))$
  immediately follows by Lemma \ref{regularoperator} $(iii)$  that
\begin{equation*}
\| (J_{\varepsilon}-I) g(u) \|_{L^{q_0'}((-T,T),L^{r_0'}(\mathcal{G}))} = o(1), \quad as \ \varepsilon \to 0.
\end{equation*}
Let us choose $T<1$ small enough such that $c\big(\lambda, \| u_0 \|^{p-1}_{L^2(\mathcal{G})}\big) T^{(5-p)/4} < 1/4$. Thus, we can absorb the right hand side term in the left hand side and we  obtain 
\begin{equation*}
\| u^{\varepsilon} -u \|_{L^{q}((-T,T),L^{r}(\mathcal{G})} = o(1), \quad as \ \varepsilon \to 0,
\end{equation*}
 for the pairs $(q,r)\in \{(\infty,2), (q_0,r_0)\}$.
In particular, when $(q,r)=(\infty,2)$ the  regularity of $u^\varepsilon$ and $u$ gives us 
\begin{equation*}
\begin{aligned}
\| u^{\varepsilon} -u \|_{C((-T,T),L^{2}(\mathcal{G})} =o(1), \quad as \ \varepsilon \to 0, 
\end{aligned}
\end{equation*}
which completes the proof.
\end{proof}

\begin{proof}[Proof of Lemma \ref{dualestimates}]
For any $\varepsilon \in (0,1/M)$ we have
\begin{align*}
\label{}
\varepsilon\|u\|_{D(\mathcal{E})}^2&\leq (1-\varepsilon M)(u,u)+\varepsilon \|u\|_{D(\mathcal{E})}^2  \\
&= (1-\varepsilon M)(u,u)+\varepsilon ( M(u,u)+\mathcal{E}(u,u))\\
&=(u,u)+\varepsilon \mathcal{E}(u,u).
\end{align*}
This shows that for such an $\varepsilon$ we can apply {\it Lax-Milgram theorem} to obtain the existence of a unique solution
 $v$ of the equation
\begin{equation*}
\begin{cases}
v \in D(\mathcal{E}),\\
(v, \varphi) + \varepsilon \mathcal{E}(v,\varphi)=<f,\varphi>, & \forall \varphi \in D(\mathcal{E}).
\end{cases}
\end{equation*}

\noindent Choosing $\varphi=v$ in the above equality, since $\dfrac{\varepsilon M}{M+1} \leq \varepsilon \leq 1$, we obtain that
\[
 {\varepsilon} \| v \|^2_{D(\mathcal{E})}  \leq (v,v) + \varepsilon \mathcal{E}(v,v) =<f,v> \leq \| f \|_{D(\mathcal{E})^*} \|v \|_{D(\mathcal{E})}.
\]
This finishes the proof.
\end{proof}

\begin{proof}[Proof of Theorem \ref{theorem 3}]

\noindent From the estimate \eqref{estimateueps1}, we obtain that for any $t$,  up to a subsequence,
$
u^{\varepsilon} (t)\rightharpoonup v(t) $  in $D(\mathcal{E})$ and 
\[
\|v(t)\|_{D(\mathcal{E})}\leq \liminf_{\varepsilon} \| u^\varepsilon\|_{D(\mathcal{E})}\leq C(\|u_0\|_{D(\mathcal{E})}).
\]
Moreover, since $u_\varepsilon (t)\rightarrow u(t)$ in $L^2(\Gamma)$ we obtain $v(t)=u(t)$. Also, since the limit point is unique  the whole sequence $(u^\varepsilon(t))_{\varepsilon>0}$ converges to $u(t)$, not only a subsequence. Then $u\in L^\infty(\rr,D(\mathcal{E}))$ and
\begin{equation}\label{liminf}
\| u(t) \|_{D(\mathcal{E})} \leq \liminf_{\epsilon} \| u^{\varepsilon}(t) \|_{D(\mathcal{E})} \leq C(u_0),\quad \forall t\in \rr.
\end{equation}
The conservation of the $L^2$-norm of $u^\varepsilon$ and $u$ and the definition of the norm on $D(\mathcal{E})$ given in \eqref{formnorm} shows that 
\[
\mathcal{E}(u(t),u(t))\leq \liminf _{\varepsilon} \mathcal{E}(u^\varepsilon(t),u^\varepsilon(t)).
\]
For any $p>1$, since $D(\mathcal{E})\hookrightarrow L^\infty(\mathcal{G})$ we have
 the following inequality
\begin{equation*}
\| u^{\varepsilon}(t)-u(t) \|_{L^{p+1}(\mathcal{G})} \leq c(p,c_1)\| u^{\varepsilon}(t)-u(t) \|_{L^2(\mathcal{G})} \bigg( \| u^{\varepsilon}(t) \|^{p-1}_{D(\mathcal{E})} + \| u(t) \|^{p-1}_{D(\mathcal{E})} \bigg).
\end{equation*}
Corroborated with \eqref{liminf} and \eqref{estimateueps1},  we obtain the strong convergence 
	$
	u^{\varepsilon}(t) \rightarrow u(t)$,   in $ L^{p+1}(\mathcal{G})$.
 In turn, together with Lemma \ref{regularoperator}, one can write for all $t \in \mathbb{R}$,
\begin{equation*}
\begin{aligned}
\| J_{\varepsilon} u^{\varepsilon}(t) - u (t) \|_{L^{p+1}(\mathcal{G})} & \leq \| J_{\varepsilon} u^{\varepsilon}(t) - J_{\varepsilon} u(t) \|_{L^{p+1}(\mathcal{G})} + \| J_{\varepsilon} u (t) - u(t) \|_{L^{p+1}(\mathcal{G})}\\
& \lesssim \| u^{\varepsilon}(t) - u(t) \|_{L^{p+1}(\mathcal{G})} + o(1) = o(1),\ \varepsilon\rightarrow 0,
\end{aligned}
\end{equation*}
 which implies
\begin{equation}\label{Jepsp}
\int_{\mathcal{G}} |J_{\varepsilon} u^{\varepsilon}(t,x)  |^{p+1} \, dx \rightarrow \int_{\mathcal{G}} |u(t,x)  |^{p+1} \, dx \quad as \ \varepsilon \to 0.
\end{equation}

\noindent From the estimates \eqref{liminf} and \eqref{Jepsp}, taking into account the conservation of the energy in Proposition \ref{globalregularized} $(ii)$,
\begin{align*}
\mathcal{E}(u(t),u(t))-&\dfrac{\lambda}{p+1} \int_{\mathcal{G}} |u(t,x)|^{p+1} \, dx \leq \liminf _{\varepsilon}\Big[\mathcal{E}(u^{\varepsilon}(t),u^{\varepsilon}(t)) - \dfrac{\lambda}{p+1} \int_{\mathcal{G}} |J_{\varepsilon} u^{\varepsilon}(t,x)|^{p+1} \, dx\Big]\\
& = \liminf _{\varepsilon}\Big[\mathcal{E}(u_0,u_0) - \dfrac{\lambda}{p+1} \int_{\mathcal{G}} |J_\varepsilon u_0(x)|^{p+1} \, dx\Big]\\
&=\mathcal{E}(u_0,u_0) - \dfrac{\lambda}{p+1} \int_{\mathcal{G}} | u_0(x)|^{p+1} \, dx.
\end{align*}

\noindent This implies that $u \in L^{\infty}(\rr;D(\mathcal{E}))$ solution of problem \eqref{system nonlinear} satisfies
\begin{equation*}
E(u(t)) \leq E(u_0), \quad \forall\ t\in\rr .
\end{equation*}
Changing the initial time for the nonlinear equation \eqref{system nonlinear} to some $t_0\neq 0$, we obtain that 
\begin{equation*}
E(u(t)) \leq E(u(t_0)),
\end{equation*}
which shows that in fact $E(u(t))$ should be constant. 

From the inequality 
\begin{align*}
\label{}
  \| u (t)-u(s) \|_{L^{p+1}(\mathcal{G})}& \leq c(p)\| u (t)-u(s) \|_{L^2(\mathcal{G})} \big( \| u (t) \|^{p}_{D(\mathcal{E})} + \| u(s) \|^{p}_{D(\mathcal{E})}  \big)\\
  &\leq C (\|u_0\|_{D(\mathcal{E})})\| u (t)-u(s) \|_{L^2(\mathcal{G})}.
\end{align*}
and the fact that $u\in C(\rr,L^2(\mathcal{G}))$, we obtain that $u\in C(\rr,L^{p+1}(\mathcal{G}))$. Using the conservation of the energy, we get that $\mathcal{E}(u(t),u(t))\rightarrow \mathcal{E}(u(s),u(s))$ as $t\rightarrow s$. This implies immediately that $u\in C(\rr;D(\mathcal{E}))$.
Since $\Delta u\in C(\rr;D(\mathcal{E})^*)$ and $g(u)\in C(\rr; L^2(\mathcal{G}))$ we obtain that 
$iu_t=\Delta u- g(u)\in C(\rr;D(\mathcal{E})^*)$ and then $u\in C^1(\rr;D(\mathcal{E})^*)$. The proof of Theorem \ref{theorem 3} is now complete.
\end{proof}

\section*{\corrected{Acknowledgements}}

\corrected{We are grateful for the referees' valuables comments and suggestions which lead to the current improved version of this work. We would like also to thank the referee for recommending to discuss the well-posedness of the nonlinear equation in the energy domain.}



\bibliographystyle{amsplain}


\providecommand{\bysame}{\leavevmode\hbox to3em{\hrulefill}\thinspace}
\providecommand{\MR}{\relax\ifhmode\unskip\space\fi MR }
\providecommand{\MRhref}[2]{%
  \href{http://www.ams.org/mathscinet-getitem?mr=#1}{#2}
}
\providecommand{\href}[2]{#2}

\end{document}